\documentclass[11pt, oneside]{article}   	
\usepackage{geometry}                		
\geometry{letterpaper}                   		
\usepackage{graphicx}				
\usepackage{amsmath,amssymb,amsthm,enumerate,comment,color, bbm}
\usepackage{hyperref}
\usepackage{mypackage}
\usepackage[T1]{fontenc}
\usepackage{enumitem}



\renewcommand{\P}{\mathcal{P}}
\newcommand{\W}{\mathcal{W}}
\theoremstyle{plain}
  \newtheorem{theorem}{Theorem}[section]
  
  \newtheorem{corollary}[theorem]{Corollary}
  \newtheorem{lemma}[theorem]{Lemma}

\theoremstyle{definition}
  
  \newtheorem{non-example}[theorem]{Non-Example}
  \newtheorem{definition}[theorem]{Definition}

  \newtheorem{remark}[theorem]{Remark}

\DeclareMathOperator*{\argmin}{arg\,min}
\let\div\relax
\DeclareMathOperator{\div}{div}
\DeclareMathOperator{\proj}{proj}
\renewcommand{\F}{\mathcal{F}}
\title{A  Linear Equation on the Set of Probability Vectors on Graphs}
\author{Miho Kasai}
\setlength\parindent{0pt}

\begin{document}
\maketitle

\begin{abstract}
\noindent
    In this paper we investigate solutions to a linear Hamilton-Jacobi equations in the Wasserstein space of probability vectors on a finite simply connected graph. We prove that there exists a solution under the assumption that the initial value function $u_0:\P(G)\to\R$ is Fr\'echet continuously differentiable.
\end{abstract}
\section{Introduction}

The goal of this paper is to understand the search of Nash equilibria in game theory with finitely many states $\{t_1, \dots, t_n\}$ which we will denote with $\{1, \dots, n\}$ with infinitely many players. 

Define the following value function in the continuum setting 

\[
v(t,x):=\min_{\gamma: \gamma(t)=x}\bigg{\{}\int_{t}^{T}\mathcal{L}(\gamma,\dot{\gamma})ds+\Phi(\gamma(T))\bigg{\}}
\]

Here $x$ belongs to a Hilbert space or to a quotient space of a Hilbert space.

It is well-known \cite{PDE} in the theory of calculus of variations that $v(t,x)$ satisfies the Hamilton-Jacobi equations
\begin{align*}
\partial_t v(t,x)+H(x,\partial_x v)&=0 \\
v(T, \cdot)&=\Phi(x)
\end{align*}

Much work has been done to understand the Hamilton-Jacobi equations in various space of measures. For example, the study of viscosity solutions in the Wasserstein space $\P_2(\R^d)$ is presented in \cite{real-W}, and in \cite{torus} for $\P(\mathbb{T}^d)$ where $\mathbb{T}^d$ is the $d$-dimensional torus. In this paper, we will study the Hamilton-Jacobi equations on the set of probability vectors on graphs, which we will denote by $\P(G)$. Define $G=(V, E, w)$ to be a simply connected undirected graph with the set of vertices $V=\{1, \dots, n\}$ and the set of edges $E\subset V^2$. Further, the weight $\omega=(\omega_{ij})$ is a $n$ by $n$ symmetric matrix with $\omega_{ij}>0$ if $(i,j)\in E$, and $\omega_{ij}=0$ otherwise.
 Note that 
\[ \P(G)=\{\rho\in\R^n: \sum \rho_i=1, \rho_i\geq 0\}\]. \\

We use $g:[0,1]\times [0,1]\to [0, \infty)$ as a metric tensor that satisfies the following properties.\\
\begin{enumerate}
\item $g\in C([0,1]^2)\cap C^{\infty}((0,1)^2)$ 
\item $g(s, t)=g(t, s)$ 
\item $g((1-\lambda)a+\lambda b)\geq (1-\lambda)g(a)+\lambda(b) \quad \forall \lambda\in(0,1)$ 
\item $g(\lambda s,\lambda t)=\lambda g(s, t) \quad \forall \lambda>0$ 
\item $\min\{s,t\}\leq g(s,t)\leq \max\{s,t\}$
\end{enumerate}
\noindent
\textbf{Examples}
\begin{itemize}
    \item[(i)] $g(s,t)=\cfrac{s+t}{2}$
    \item[(ii)] $g(s,t)=\begin{cases}\cfrac{2}{\frac{1}{s}+\frac{1}{t}} & st\neq 0\\
    0 & st=0
    \end{cases}$
    \item[(iii)] $g(s,t)=\begin{cases}\cfrac{s-t}{\log s-\log t} & st\neq 0, s\neq t\\
    t & st\neq 0, s=t \\
    0 & st=0
    \end{cases}$
\end{itemize}

For $v\in\mathbb{S}^{n\times n}$ and $\rho\in \P(G)$, define graph-divergence
\[\langle \text{div}_{\rho}(v) \rangle_i=\sum_{j\in N(i)} \sqrt{\omega_{ij}}g_{ij}(\rho)v_{ji}\]
where $N(i)$ is the set of vertices connected to $i$, and $g_{ij}(\rho):=g(\rho_i, \rho_j)$. 

Further, for $v, \bar{v}\in\mathbb{S}^{n\times n}$, define graph-inner product and norm
\[(v, \bar{v})_\rho=\frac{1}{2}\sum_{(i,j)\in E}g_{ij}(\rho)v_{ij}\bar{v}_{ij}\]
\[\Vert v\Vert_{\rho}^2=(v,v)_\rho\]

Formulating the problem in terms of PDEs, we are given with the initial value function
\begin{equation}
u_0: P(G)\to\mathbb{R}
\end{equation}
and a running cost 
\begin{equation}
\mathcal{L}: \P(G)\times \mathbb{S}^{n\times n}\to\mathbb{R}
\end{equation}
where $\mathbb{S}^{n\times n}$ is the set of $n\times n$ skew-symmetric matrices and a noise intensity $\epsilon\geq 0$.

We want to solve the Hamilton-Jacobi equations
\begin{equation} \label{HJE'}
\begin{cases}
\partial u(t, \mu) + \mathcal{H}(\mu, \nabla_{\mathcal{W}}u(t,\mu))=\epsilon \Delta_{ind}u(t,\mu) \\
u(0, \mu)=u_0
\end{cases}
\end{equation}
Here \[\mathcal{H}(\mu, p)=\sup_{v\in \mathbb{S}^{n\times n}} (v, p)_\mu-\mathcal{L}(\mu, v) \text{ where } p\in\mathbb{S}^{n\times n}\] and the individual noise operator which is first defined in \cite{graph} is
\[\Delta_{ind}u(t, \mu)=\mathcal{O}_\mu(\nabla_\mathcal{W} u(t, \mu))\] where 
\[\mathcal{O}_\mu(p)=(\text{div}_\mu(p), \log\mu)_\mu\]

The existence of a solution to \eqref{HJE'} is shown in \cite{graph} when there exists $\kappa\in (1, \infty)$ such that 
\begin{equation} \label{eq:4}
\forall \epsilon>0 \quad \exists \theta_\epsilon \quad s.t. \quad \theta_\epsilon \Vert p\Vert_{\mu}^{\kappa}\leq \mathcal{H}(\mu, p)\quad \forall p\in\mathbb{S}^{n\times n}
\end{equation}

Note that \eqref{eq:4} is not satisfied when $\mathcal{H}(\mu, p)=0$. The objective of this paper is to study this case. For simplicity, we may assume that $\epsilon=1$, since if $u^{\epsilon}(t,\mu)=u(\epsilon t, \mu)$, then
\[ \partial_t u^{\epsilon}(t, \mu)=\epsilon \partial_t u\bigg(\frac{t}{\epsilon}, \mu\bigg)=\epsilon \Delta_{ind}u\bigg(\frac{t}{\epsilon}, \mu\bigg)\].

Thus we are concerned with solving 
\begin{equation} \label{HJE}
\begin{cases}
\partial u(t, \mu) = \Delta_{ind}u(t,\mu) \\
u(0, \mu)=u_0
\end{cases}
\end{equation}

\section{Preliminaries}\label{section:1}

\begin{lemma}

For $\phi\in\R^n$, $\rho\in \P(G), v\in\mathbb{S}^{n\times n}$, we have the following integration by parts formula
\begin{equation}
    (\phi, \div_\rho(v))=-(\nabla_G\phi,v)_\rho
\end{equation}
\end{lemma}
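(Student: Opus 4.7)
The strategy is a direct computation: expand both sides via the definitions of graph divergence and graph inner product, and use the skew-symmetry of $v$ together with the symmetry of $\omega$ and $g$ to symmetrize the resulting sum over edges.

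First I will write the Euclidean inner product on the left as
\[
(\phi,\div_\rho(v)) \;=\; \sum_{i\in V} \phi_i \sum_{j\in N(i)} \sqrt{\omega_{ij}}\, g_{ij}(\rho)\, v_{ji},
\]
and reindex the double sum as a single sum over ordered edges $(i,j)\in E$. Using $v_{ji}=-v_{ij}$ pulls out an overall minus sign, giving
\[
(\phi,\div_\rho(v)) \;=\; -\sum_{(i,j)\in E} \phi_i \sqrt{\omega_{ij}}\, g_{ij}(\rho)\, v_{ij}.
\]

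Next I symmetrize. Swapping the dummy indices $i\leftrightarrow j$ in the same sum and using $\omega_{ij}=\omega_{ji}$, $g_{ij}(\rho)=g_{ji}(\rho)$, and $v_{ji}=-v_{ij}$ shows that
\[
\sum_{(i,j)\in E} \phi_i\sqrt{\omega_{ij}}\,g_{ij}(\rho)\,v_{ij} \;=\; -\sum_{(i,j)\in E} \phi_j\sqrt{\omega_{ij}}\,g_{ij}(\rho)\,v_{ij}.
\]
Averaging these two expressions yields the half-sum
\[
(\phi,\div_\rho(v)) \;=\; \tfrac{1}{2}\sum_{(i,j)\in E} \sqrt{\omega_{ij}}\,(\phi_i-\phi_j)\, g_{ij}(\rho)\, v_{ij},
\]
which, with the convention $(\nabla_G\phi)_{ij}=\sqrt{\omega_{ij}}(\phi_j-\phi_i)$ used throughout the paper, matches $-(\nabla_G\phi,v)_\rho$ exactly.

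I do not expect any genuine obstacle here; the proof is a bookkeeping exercise. The one point that deserves care is making explicit that $E$ is treated as a set of ordered pairs (consistent with the factor $\tfrac{1}{2}$ in the definition of $(\cdot,\cdot)_\rho$ and with the sum over $N(i)$ in $\div_\rho$), because otherwise the symmetrization step and the final factor of $\tfrac{1}{2}$ will not line up. Aside from that, the three ingredients — skew-symmetry of $v$, symmetry of $\omega$ and $g$, and the index swap $i\leftrightarrow j$ — do all the work.
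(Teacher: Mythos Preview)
Your approach --- expanding both sides from the definitions and symmetrizing the sum over $E$ using the skew-symmetry of $v$ --- is exactly what the paper does. One bookkeeping slip to fix: after the averaging step you should obtain $(\phi,\div_\rho(v)) = -\tfrac{1}{2}\sum_{(i,j)\in E}\sqrt{\omega_{ij}}(\phi_i-\phi_j)g_{ij}(\rho)v_{ij}$ (you lost the overall minus sign carried from $(\phi,\div_\rho(v))=-\sum\phi_i\cdots$), and the paper's convention is $(\nabla_G\phi)_{ij}=\sqrt{\omega_{ij}}(\phi_i-\phi_j)$ rather than $(\phi_j-\phi_i)$; these two sign slips cancel, so your final identity is still correct.
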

\begin{proof}
\begin{align*}
(\nabla_G\phi,v)_\rho&=\frac{1}{2}\sum_{(i,j)\in E}(\nabla_G\phi)_{ij}v_{ij}g_{ij}(\rho) \\
&=\frac{1}{2}\sum_{(i,j)\in E}\sqrt{\omega_{ij}}(\phi_i-\phi_j)v_{ij}g_{ij}(\rho) \\
(\phi, \div_\rho(v))&=\sum_{i=1}^{n}\phi_i(\div_{\rho}(v))_i\\
&=\sum_{i=1}^{n}\phi_i\sum_{j\in N(i)}\sqrt{\omega_{ij}}v_{ji}g_{ij}(\rho) \\
&=\frac{1}{2}\sum_{(i,j)\in E}(\phi_i-\phi_j)\sqrt{\omega_{ij}}v_{ji}g_{ij}(\rho) \\
&=-(\nabla_G\phi,v)_\rho
\end{align*}  
\end{proof}
\noindent
\begin{definition}\textbf{(Velocity)}

Let $\sigma\in C([0,T],\P(G))$ and $v:(0,T)\to \mathbb{S}^{n\times n}$. We say that $v$ is a velocity for $\sigma$ if \begin{equation}
    \dot{\sigma}+\div_\sigma(v)=0
\end{equation}
This is analogous to the definition of velocity field in fluid mechanics.
\end{definition}

\begin{lemma}
Assume that $v\in C([0,T],\mathbb{S}^{n\times n})$, $\sigma\in (C[0,T],[0,1]^n)$ and $\dot{\sigma}+\div_\sigma(v)=0$. 
\begin{equation}
\text{If }\sum_{i=1}^n\sigma_i(0)=1 \text{, then } \sum_{i=1}^{n}\sigma_i(t)=1 \quad \forall t\in[0,T]
\end{equation}
In other words, if $\sigma\in \P(G)$ at $0$, it remains in $\P(G)$ for all $t$ provided that we know $\sigma_i\geq 0$ for all $i$.
\end{lemma}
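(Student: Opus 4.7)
The plan is to show $\sum_i \sigma_i(t)$ is constant in time by demonstrating that the divergence operator $\div_\sigma(v)$ always sums to zero in the vertex index, so the ODE $\dot\sigma = -\div_\sigma(v)$ preserves the coordinate sum.

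First I would use the integration by parts formula from the previous lemma with the test function $\phi = \mathbf{1} = (1,\dots,1) \in \mathbb{R}^n$. Since $(\nabla_G \phi)_{ij} = \sqrt{\omega_{ij}}(\phi_i - \phi_j) = 0$ for every $(i,j) \in E$, the right-hand side $(\nabla_G \phi, v)_\sigma$ vanishes identically, and the left-hand side reads
\begin{equation*}
(\mathbf{1}, \div_\sigma(v)) = \sum_{i=1}^n (\div_\sigma(v))_i = 0.
\end{equation*}
(Alternatively one can verify this directly: the double sum $\sum_i \sum_{j \in N(i)} \sqrt{\omega_{ij}} g_{ij}(\sigma) v_{ji}$ is invariant under relabelling $i \leftrightarrow j$, which together with $v_{ij} = -v_{ji}$ forces it to equal its own negative.)

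Next I would fix $t \in [0,T]$ and sum the ODE $\dot\sigma_i(t) + (\div_\sigma(v))_i = 0$ over $i$. By the previous step,
\begin{equation*}
\frac{d}{dt} \sum_{i=1}^n \sigma_i(t) = -\sum_{i=1}^n (\div_\sigma(v))_i = 0.
\end{equation*}
Strictly speaking one should note that the continuity of $v$ and $\sigma$ guarantees that $t \mapsto \sum_i \sigma_i(t)$ is absolutely continuous (in fact $C^1$), so we may integrate from $0$ to $t$ to get $\sum_i \sigma_i(t) = \sum_i \sigma_i(0) = 1$.

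No step here is really an obstacle; the only thing to watch is the precise regularity with which one invokes the ODE, since $\sigma$ is only assumed continuous in the statement. Because $\div_\sigma(v)$ is continuous in $t$ (as both $\sigma$ and $v$ are continuous and $g$ is continuous on $[0,1]^2$), the relation $\dot\sigma + \div_\sigma(v) = 0$ is to be read in an integrated sense, and the cancellation $\sum_i (\div_\sigma(v))_i \equiv 0$ applies under the integral sign, giving the claim.
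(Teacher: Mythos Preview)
Your proof is correct and follows essentially the same route as the paper: show that $\sum_i (\div_\sigma(v))_i = 0$ and conclude that $\frac{d}{dt}\sum_i \sigma_i(t)=0$. The only cosmetic difference is that you invoke the integration-by-parts lemma with $\phi=\mathbf{1}$, whereas the paper writes out the double sum and cancels it directly via the skew-symmetry of $v$; you yourself note this alternative, and the two are the same computation.
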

\begin{proof}
    \begin{align*}
        \frac{d}{dt}\bigg(\sum_{i=1}^n\sigma_i(t)\bigg)&=\sum_{i=1}^{n}\sigma_i(t)
        =\sum_{i=1}^{n}-\langle \div_{\sigma}(v)\rangle_i
        =-\sum_{i=1}^{n}\sum_{j\in N(i)}\sqrt{\omega_{ij}}g_{ij}(\rho)v_{ij} 
    \end{align*}
    Thus \[\frac{d}{dt}\bigg(\sum_{i=1}^n\sigma_i(t)\bigg)=-\sum_{(i,j)\in E}\sqrt{\omega_{ij}}g_ij(\rho)(v_{ij}-v_{ji})
        =0\]

\end{proof}

\noindent
\begin{definition}\textbf{(Poincar\'e function)}

Given $\rho\in\P(G)$, we define 
\begin{equation}
    \gamma_{\text{Poincar\'e}}(\rho)=\inf_{\beta\in\R^n}\bigg\{ \frac{1}{2}\sum_{(i,j)\in E}g_{ij}(\rho)(\beta_i-\beta_j)^2\omega_{ij}:\sum_{i=1}^n\beta_i=0, \sum_{i=1}^n\beta_i^2=1\bigg\}
\end{equation}
\end{definition}

\begin{definition}
    We further define $P_\epsilon(G)=(\epsilon,1]^n\cap P(G)=\{\rho\in\P(G):\rho_i>\epsilon \quad \forall i\}$ if $\epsilon\in[0,1)$
\end{definition}

\begin{lemma}
    If $\rho\in\P_0(G)$ then $\gamma_{\text{Poincar\'e}}(\rho)>0$.
\end{lemma}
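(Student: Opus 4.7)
The plan is to show that the infimum is attained, and then to argue the minimum value cannot be zero by exploiting the positivity of $\rho$ and the connectedness of $G$.

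First I would observe that the constraint set
\[
S := \Big\{\beta\in\R^n : \sum_{i=1}^n\beta_i=0,\ \sum_{i=1}^n\beta_i^2=1\Big\}
\]
is compact (a sphere intersected with a hyperplane in $\R^n$, both closed, the sphere bounded). The functional
\[
F(\beta) := \tfrac{1}{2}\sum_{(i,j)\in E} g_{ij}(\rho)(\beta_i-\beta_j)^2\omega_{ij}
\]
is a continuous (in fact quadratic) function of $\beta$, and it is manifestly nonnegative. Therefore by Weierstrass the infimum is attained at some $\beta^*\in S$, and $\gamma_{\text{Poincar\'e}}(\rho)=F(\beta^*)\geq 0$.

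Next I would argue strict positivity by contradiction: suppose $F(\beta^*)=0$. Since $F$ is a sum of nonnegative terms, each summand vanishes, so for every edge $(i,j)\in E$,
\[
g_{ij}(\rho)(\beta^*_i-\beta^*_j)^2\omega_{ij}=0.
\]
Here $\omega_{ij}>0$ by the definition of an edge, and by hypothesis $\rho\in\P_0(G)$ means $\rho_i>0$ for every $i$. Property (5) of the metric tensor gives $g_{ij}(\rho)=g(\rho_i,\rho_j)\geq\min\{\rho_i,\rho_j\}>0$. Hence $\beta^*_i=\beta^*_j$ for every $(i,j)\in E$.

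Finally, since $G$ is connected, this forces $\beta^*$ to be constant across $V$. Combined with the constraint $\sum_i\beta^*_i=0$, we obtain $\beta^*\equiv 0$, contradicting $\sum_i(\beta^*_i)^2=1$. Thus $F(\beta^*)>0$ and $\gamma_{\text{Poincar\'e}}(\rho)>0$. I do not expect a real obstacle here; the only point that deserves care is verifying that $g_{ij}(\rho)$ is strictly positive on the closed set $\P_0(G)$, which is exactly what property (5) of $g$ together with $\rho\in\P_0(G)$ delivers.
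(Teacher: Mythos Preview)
Your proof is correct and follows essentially the same approach as the paper: establish that the infimum is attained by compactness and continuity, then argue by contradiction using the positivity of $g_{ij}(\rho)$ on $\P_0(G)$ and the connectedness of $G$. Your write-up is in fact slightly more careful than the paper's, since you explicitly invoke property~(5) of $g$ to justify $g_{ij}(\rho)>0$ and correctly identify the compact constraint set for $\beta$.
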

    
\begin{proof}
    Note that since $\P(G)$ is compact and $\beta\mapsto \Vert\nabla_G\beta\Vert_\rho^2$ is continuous, $\gamma_{\text{Poincar\'e}}(\rho)$ is the minumum, and $G$ is connected.

    Assume $\gamma_{\text{Poincar\'e}}(\rho)=0$. Then there exists$\beta\in\R^n$ such that $\sum \beta_i=0,\sum\beta_i^2=1$ that satisfies \[\cfrac{1}{2}\sum_{(i,j)\in E}g(\rho_i,\rho_j)(\beta_i-\beta_j)^2\omega_{ij}=0\].

    But $g(\rho_i,\rho_j)(\beta_i-\beta_j)^2\omega_{ij}\geq0$, so $g(\rho_i,\rho_j)(\beta_i-\beta_j)^2\omega_{ij}=0\quad\forall (i,j)\in E$. \\
    Since $g(\rho_i,\rho_j)>0$, $(\beta_i-\beta_j)^2\omega_{ij}\geq0\quad\forall (i,j)\in E$. Fix $i,j\in (1, \dots, n)$. Then there exists $i=l_0, l_1, \dots, l_m=j$ such that $w_{l_{k}l_{k+1}}>0$ for $k\in \{0, \dots, m-1\}$. \\Thus $(\beta_{l_{k}}-\beta_{l_{k+1}})^2w_{l_{k}l_{k+1}}=0$, so $\beta_{l_{k}}=\beta_{l_{k+1}}$ for $k\in \{0, \dots, m-1\}$. Hence $\beta_i=\beta_j$. However this contradicts that $\sum \beta_i=0$ and $\sum \beta_i^2=1$.
\end{proof}

\begin{theorem}\label{thm:Poincare}
    Let $\phi\in\R^n$. Then there exists $\tilde{\phi}\in\R^n$ such that the following holds.
    \begin{itemize}
        \item[(i)] $\nabla_G\tilde{\phi}=\nabla_G\phi$\\
        \item[(ii)] $\Vert\tilde{\phi}\Vert_{l_2}<\Vert\phi\Vert_{l_2}$ unless $\sum\phi_i=0$ \\
        \item[(iii)]$(\tilde{\phi},\mathbbm{1})_{l_2}=0$ \\
        \item[(iv)]$\Vert\tilde{\phi}\|_{l_2}^2\gamma_{\text{Poincar\'e}}(\rho)\leq\Vert\nabla_G\phi\Vert_{\rho}^2$
    \end{itemize}
\end{theorem}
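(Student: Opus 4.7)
The natural candidate is $\tilde{\phi} := \phi - \bar{\phi}\,\mathbbm{1}$ where $\bar{\phi} = \frac{1}{n}\sum_{i=1}^n \phi_i$; this is the orthogonal projection of $\phi$ onto $\mathbbm{1}^\perp$ in $\R^n$. Everything should follow from this single choice.

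For (i), I would just compute: $(\nabla_G\tilde{\phi})_{ij} = \sqrt{\omega_{ij}}(\tilde{\phi}_i - \tilde{\phi}_j) = \sqrt{\omega_{ij}}(\phi_i - \phi_j)$ since the constant $\bar\phi$ cancels, giving $\nabla_G\tilde\phi = \nabla_G\phi$. For (iii), $\sum_i\tilde{\phi}_i = \sum_i\phi_i - n\bar{\phi} = 0$ by the definition of $\bar{\phi}$. For (ii), since $\tilde{\phi}\perp\mathbbm{1}$ in the $l_2$ inner product, Pythagoras gives $\|\phi\|_{l_2}^2 = \|\tilde{\phi}\|_{l_2}^2 + n\bar{\phi}^2$, so the inequality is strict unless $\bar{\phi}=0$, i.e.\ unless $\sum_i\phi_i = 0$.

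The only item with any content is (iv), and it is a direct normalization argument using the definition of $\gamma_{\text{Poincar\'e}}$. If $\tilde{\phi} = 0$ the inequality is trivial, so assume $\tilde{\phi}\neq 0$ and set $\beta := \tilde{\phi}/\|\tilde{\phi}\|_{l_2}$. Then by (iii) and the normalization, $\beta$ is admissible in the infimum defining $\gamma_{\text{Poincar\'e}}(\rho)$, so
\[
\gamma_{\text{Poincar\'e}}(\rho) \;\leq\; \frac{1}{2}\sum_{(i,j)\in E} g_{ij}(\rho)(\beta_i-\beta_j)^2\omega_{ij} \;=\; \frac{1}{\|\tilde{\phi}\|_{l_2}^2}\cdot\frac{1}{2}\sum_{(i,j)\in E}g_{ij}(\rho)\omega_{ij}(\tilde{\phi}_i-\tilde{\phi}_j)^2.
\]
Rearranging, the right-hand side equals $\|\nabla_G\tilde{\phi}\|_\rho^2/\|\tilde{\phi}\|_{l_2}^2$, which by (i) is $\|\nabla_G\phi\|_\rho^2/\|\tilde{\phi}\|_{l_2}^2$. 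Multiplying through by $\|\tilde{\phi}\|_{l_2}^2$ yields (iv).

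There is no real obstacle here; the result is essentially the statement that projecting out the constant component realizes the Poincar\'e inequality. The only small care needed is handling the degenerate case $\tilde{\phi}=0$ in (iv) (where both sides are zero), and noting in (ii) that the inequality is strict precisely when the projection actually removed something, i.e.\ when $\bar{\phi}\neq 0$.
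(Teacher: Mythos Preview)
Your proposal is correct and follows essentially the same argument as the paper: both take $\tilde\phi=\phi-\bar\phi\,\mathbbm{1}$ with $\bar\phi=\tfrac{1}{n}\sum_i\phi_i$, verify (i)--(iii) by direct computation, and prove (iv) by normalizing $\tilde\phi$ to an admissible $\beta$ in the definition of $\gamma_{\text{Poincar\'e}}(\rho)$, handling the case $\tilde\phi=0$ separately.
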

\begin{proof}
    Define $\lambda=\cfrac{1}{n}\sum_{i=1}^n\phi_i, \tilde\phi_j=\phi_j-\lambda$.
    \begin{itemize}
        \item[(i)] $\tilde\phi_j-\tilde\phi_i=\phi_j-\phi_i$, so $\nabla_G\tilde\phi=\nabla_G\phi$.
        \item[(ii)] $\Vert\tilde\phi\Vert_{l_2}^2=\sum_{j=1}^n(\phi_j-\lambda)^2=\Vert\phi\Vert_{l_2}^2-n\lambda^2\leq\Vert\phi\Vert_{l_2}^2$, equality holds iff $\lambda=0$.
        \item[(iii)] $\sum_{j=1}^n\tilde\phi_j=\sum_{j=1}^n(\phi_j-\lambda)=\lambda n-n\lambda=0$.
        \item[(iv)] If $\phi_i=\lambda\quad\forall i$, then $\tilde\phi=0$. Suppose there exists $i_0$ such that $\phi_{i_0}\neq\lambda$. \\Let $\beta_j=\cfrac{\phi_j-\lambda}{\sqrt{\sum_{i=1}^n(\phi_i-\lambda)^2}}=\cfrac{\tilde\phi_j}{\Vert\tilde\phi\Vert_{l_2}}$. Then $\sum_{j=1}^n\beta_j=0, \sum_{j=1}^n\beta_j^2=1$. 
        Thus \[\gamma_{\text{Poincar\'e}}(\rho)\leq\cfrac{1}{2}\sum_{(i,j)\in E}g(\rho_i,\rho_j)\omega_{ij}(\beta_j-\beta_i)^2=\cfrac{\Vert\nabla_G\tilde\phi\Vert_\rho^2}{\Vert\tilde\phi\Vert_{l_2}^2}\].
    \end{itemize}
    
\end{proof}
\begin{theorem}\label{thm:2.5}
    Let $\rho\in\P(G)$ be such that $\gamma_{\text{Poincar\'e}}(\rho)>0$. Given $f\in\R^n$ such that $\sum f_i=0$, there exists $\phi\in\R^d$ such that $f=-\div_\rho(\nabla_G\phi)$. Further we have $\
    \Vert\nabla_G\phi\Vert^2_\rho\leq\Vert v\Vert_\rho^2$ when $f=-\div_\rho(v)$.
\end{theorem}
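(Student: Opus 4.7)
The plan is to realize $\phi$ as the minimizer of the strictly convex quadratic functional
\[
J(\phi) = \tfrac{1}{2}\Vert\nabla_G \phi\Vert_\rho^2 - (f,\phi)_{l_2}
\]
over the zero-mean subspace $V = \{\phi \in \R^n : \sum_i \phi_i = 0\}$. The hypothesis $\sum f_i = 0$ guarantees $J(\phi + c\mathbbm{1}) = J(\phi)$ for every $c \in \R$, so restricting to $V$ costs nothing; moreover it is on $V$ that the Poincar\'e inequality of Theorem \ref{thm:Poincare} is directly available.

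For existence of a minimizer I need coercivity. For $\phi \in V$, Theorem \ref{thm:Poincare}(iv) applied with $\tilde\phi = \phi$ gives $\gamma_{\text{Poincar\'e}}(\rho)\Vert\phi\Vert_{l_2}^2 \leq \Vert\nabla_G \phi\Vert_\rho^2$, so
\[
J(\phi) \geq \tfrac{1}{2}\gamma_{\text{Poincar\'e}}(\rho)\Vert\phi\Vert_{l_2}^2 - \Vert f\Vert_{l_2}\Vert\phi\Vert_{l_2},
\]
which tends to $+\infty$ as $\Vert\phi\Vert_{l_2} \to \infty$. Since $V$ is a finite-dimensional subspace and $J$ is continuous, a minimizer $\phi^\ast \in V$ exists.

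To identify the Euler--Lagrange equation, fix $\psi \in V$ and compute $\frac{d}{dt}J(\phi^\ast + t\psi)|_{t=0} = 0$, giving $(\nabla_G \phi^\ast, \nabla_G \psi)_\rho = (f,\psi)_{l_2}$. Applying the integration-by-parts lemma from Section \ref{section:1}, the left-hand side equals $-(\div_\rho(\nabla_G\phi^\ast),\psi)_{l_2}$, so $-\div_\rho(\nabla_G\phi^\ast) - f$ is $l_2$-orthogonal to $V$ and hence a multiple of $\mathbbm{1}$. To identify the multiplier I sum all coordinates: $\sum_i f_i = 0$ by hypothesis, and the identity $\sum_i \langle \div_\rho (w)\rangle_i = 0$ for every $w \in \mathbb{S}^{n\times n}$ (which follows from the integration-by-parts lemma with $\phi = \mathbbm{1}$, since $\nabla_G \mathbbm{1} = 0$) pins down the remaining term. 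Hence the constant vanishes and $f = -\div_\rho(\nabla_G\phi^\ast)$.

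For the minimality inequality, suppose in addition $f = -\div_\rho(v)$ for some $v \in \mathbb{S}^{n\times n}$. Two applications of integration by parts give
\[
\Vert\nabla_G \phi^\ast\Vert_\rho^2 = -(\phi^\ast, \div_\rho(\nabla_G\phi^\ast))_{l_2} = (\phi^\ast,f)_{l_2} = -(\phi^\ast,\div_\rho(v))_{l_2} = (\nabla_G \phi^\ast, v)_\rho,
\]
so Cauchy--Schwarz in the graph inner product yields $\Vert\nabla_G \phi^\ast\Vert_\rho \leq \Vert v\Vert_\rho$. I do not foresee a major obstacle; the only subtlety is tracking the one-dimensional kernel of constants carefully so that the Lagrange multiplier in the Euler--Lagrange identity is correctly shown to vanish, which is exactly where $\sum f_i = 0$ is needed.
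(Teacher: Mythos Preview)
Your proof is correct and follows essentially the same variational approach as the paper: both minimize the quadratic functional $J(\phi)=\tfrac12\Vert\nabla_G\phi\Vert_\rho^2-(f,\phi)_{l_2}$ and read off the Euler--Lagrange equation. The only cosmetic differences are that the paper works with a minimizing sequence over all of $\R^n$ and projects onto zero-mean vectors afterward (whereas you restrict to $V$ from the start, making coercivity immediate), and the paper obtains the inequality $\Vert\nabla_G\phi\Vert_\rho^2\le\Vert v\Vert_\rho^2$ by the Pythagorean expansion $\Vert v\Vert_\rho^2=\Vert v-\nabla_G\phi\Vert_\rho^2+\Vert\nabla_G\phi\Vert_\rho^2$ rather than Cauchy--Schwarz.
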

\begin{proof}
    Define $F(\phi)=\cfrac{1}{2}\Vert\nabla_G\phi\Vert^2_\rho-(f,\phi)_{l_2}$. Let 
    \[\inf\{F(\phi):\phi\in\R^n\}=\lim_{k\to\infty}F(\phi_k)\]. From Theorem \ref{thm:Poincare} let $(\tilde\phi_k)_k$ be such that $\Vert\nabla_G\tilde\phi_k\Vert_\rho^2=\Vert\nabla_G\phi_k\Vert_\rho^2$ \\Then \[(f, \phi_k-\tilde\phi_k)_{l_2}=\sum_{i=1}^nf_i(\phi_k-\tilde\phi_k)_i=\sum_{i=1}^n\lambda f_i=0\] and so $\lim_{k\to\infty}F(\phi_k)=\lim_{k\to\infty}F(\tilde\phi_k)$.
    
    From Theorem \ref{thm:Poincare} \[\Vert\tilde\phi\Vert_{l_2}\leq\frac{\Vert\nabla_G\phi\Vert_\rho}{\sqrt{\gamma_{\text{Poincar\'e}}(\rho)}}\]

    Thus \begin{align*}
        1=1+F(0)\geq F(\tilde\phi_k)&=\frac{1}{2}\Vert\nabla_G\tilde\phi_k\Vert^2_\rho-(f,\tilde\phi_k)_{l_2} \\
        &\geq \frac{1}{2}\Vert\nabla_G\tilde\phi_k\Vert_\rho^2-\Vert f\Vert_{l_2}\Vert\tilde\phi_k\Vert_{l_2} \\
        &\geq \frac{1}{2}\Vert\nabla_G\tilde\phi_k\Vert_\rho^2-\Vert f\Vert_{l_2}\frac{\Vert\nabla_G\phi_k\Vert_\rho}{\sqrt{\gamma_{\text{Poincar\'e}}}(\rho)}
    \end{align*}

    Hence \[\Vert\nabla_G\tilde\phi_k\Vert_\rho^2-2\Vert f\Vert_{l_2}\frac{\Vert\nabla_G\phi_k\Vert_\rho}{\sqrt{\gamma_{\text{Poincar\'e}}(\rho)}}-2\leq 0\]

    Thus we have 
    \[\frac{\Vert f\Vert_{l_2}}{\sqrt{\gamma_{\text{Poincar\'e}(\rho)}}}-\sqrt{\frac{\Vert f\Vert_{l_2}^2}{\gamma_{\text{Poincar\'e}}(\rho)}+8}\leq\Vert\nabla_G\tilde\phi_k\Vert_\rho^2\leq\frac{\Vert f\Vert_{l_2}}{\sqrt{\gamma_{\text{Poincar\'e}(\rho)}}}+\sqrt{\frac{\Vert f\Vert_{l_2}^2}{\gamma_{\text{Poincar\'e}}(\rho)}+8}\] and 

    \[\sup_{k}\Vert\nabla_G\tilde\phi_k\Vert_\rho\leq \frac{\Vert f\Vert_{l_2}}{\sqrt{\gamma_{\text{Poincar\'e}(\rho)}}}+\sqrt{\frac{\Vert f\Vert_{l_2}^2}{\gamma_{\text{Poincar\'e}}(\rho)}+8}\]

    Thus \[\Vert\tilde\phi_k\Vert_{l_2}\leq \frac{1}{\sqrt{\gamma_{\text{Poincar\'e}(\rho)}}}\bigg(\frac{\Vert f\Vert_{l_2}}{\sqrt{\gamma_{\text{Poincar\'e}(\rho)}}}+\sqrt{\frac{\Vert f\Vert_{l_2}^2}{\gamma_{\text{Poincar\'e}}(\rho)}+8}\bigg)\]

    Since $\tilde\phi_k$ is bounded, $\tilde\phi_k\to \phi$ up to some subsequence $(k_l)_{l=1}^{\infty}$.

    Thus \[F(\phi)=\lim_{l\to\infty}F(\tilde\phi_{k_l})=\lim_{l\to\infty}\frac{1}{2}\sum_{(i,j)\in E} \omega_{ij}g(\rho_i,\rho_j)((\phi_{k_l})_j-(\phi_{k_l})_i)^2-\sum_{i=1}^{n}f_{i}(\phi_{k_l})_i\]

    Therefore $F(\phi)=\inf\{F(\phi):\phi\in\R^n\}$.

    Let $\psi\in\R^n$. Then $h(\epsilon):=F(\phi+\epsilon\psi)\geq F(\phi)=h(0) \quad \forall \epsilon\in\R$.

    \begin{align*}
        h(\epsilon)&=\frac{1}{2}\Vert\nabla_G\phi+\epsilon\nabla_G\psi\Vert_\rho^2-(\phi+\epsilon\psi,f) \\ 
        &=\frac{1}{2}\Vert\nabla_G\phi\Vert_\rho^2+\epsilon(\nabla_G\phi, \nabla_G\psi)_\rho+\frac{\epsilon^2}{2}\Vert\nabla_G\psi\Vert^2_\rho-(\phi,f)-\epsilon(\psi, f) \\
        h'(0)&=(\nabla_G\phi,\nabla_G\psi)_\rho-(\psi,f)=-(\div_\rho(\nabla_G\phi)+f,\psi)=0
    \end{align*}

    Thus we get $f=-\div_\rho(\nabla_G\phi)$.

    Assume now that $v\in\mathbb{S}^{n\times n}$ also satisfies $f=-\div_\rho(v)$. Then 
    \begin{align*}
        \Vert v\Vert_\rho^2&=\Vert v-\nabla_G\phi+\nabla_G\phi\Vert_\rho^2 \\
        &=\Vert v-\nabla_G\phi\Vert^2_\rho+2(\div_\rho(v)-\div_\rho\nabla_G\phi,\phi)+\Vert\nabla_G\phi\Vert^2_\rho \\
        &\geq \Vert \nabla_G\phi\Vert_\rho^2
    \end{align*}
    where equality holds iff $\Vert v-\nabla_G\phi\Vert_\rho^2=0$.
\end{proof}
\begin{corollary}
    If $\sigma\in C^1([0,T],\P(G))$ then there exists $\phi\in C([0,T],\R^n)$ such that \\$\dot{\sigma}+\div_{\sigma}(\nabla_G\phi)=0$. 
    
    If $v\in C([0,T],\mathbb{S}^{n\times n})$ is another velocity for $\sigma$, then $\Vert\nabla_G\phi(t)\Vert^2_{\sigma(t)}\leq \Vert v(t)\Vert_{\sigma(t)}^2$.
\end{corollary}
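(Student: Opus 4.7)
The plan is to apply Theorem~\ref{thm:2.5} pointwise in $t$ and then promote the resulting family $\{\phi(t)\}_{t\in[0,T]}$ to a continuous curve in $\R^n$. I will proceed under the natural hypothesis that $\sigma(t)\in\P_0(G)$ for every $t$, which is the setting in which Theorem~\ref{thm:2.5} (and the supporting Poincar\'e positivity lemma) actually applies.

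First I would fix $t\in[0,T]$ and set $f(t):=-\dot{\sigma}(t)$. Since $\sigma(t)\in\P(G)$, differentiating $\sum_i\sigma_i(t)=1$ gives $\sum_i f_i(t)=0$, so Theorem~\ref{thm:2.5} produces some $\phi(t)\in\R^n$ satisfying $\dot\sigma(t)+\div_{\sigma(t)}(\nabla_G\phi(t))=0$. Two such solutions differ by a vector whose graph-gradient vanishes, which, since $G$ is connected, must be a scalar multiple of $\mathbbm{1}$; I would therefore impose the normalization $\sum_i\phi_i(t)=0$ in order to make $\phi(t)$ uniquely determined.

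The main obstacle will be the continuity of $t\mapsto\phi(t)$. Since $t\mapsto\sigma(t)$ is continuous into the compact set $K:=\{\sigma(s):s\in[0,T]\}\subset\P_0(G)$, and $\rho\mapsto\gamma_{\text{Poincar\'e}}(\rho)$ is continuous and strictly positive on $\P_0(G)$, one has $c:=\inf_{\rho\in K}\gamma_{\text{Poincar\'e}}(\rho)>0$. Combined with the quantitative a priori estimate extracted from the proof of Theorem~\ref{thm:2.5} and the Poincar\'e inequality of Theorem~\ref{thm:Poincare}(iv), this gives $\|\phi(t)\|_{l_2}$ bounded uniformly in $t$ by a continuous function of $\|\dot{\sigma}(t)\|_{l_2}$, and the latter is bounded thanks to the $C^1$ hypothesis on $\sigma$. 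For $t_k\to t$ I would then extract a convergent subsequence $\phi(t_{k_j})\to\phi_*$; passing to the limit in $\dot\sigma(t_{k_j})+\div_{\sigma(t_{k_j})}(\nabla_G\phi(t_{k_j}))=0$ using joint continuity of $(\rho,\phi)\mapsto\div_\rho(\nabla_G\phi)$ shows $\phi_*$ solves the equation at time $t$ and satisfies $\sum_i(\phi_*)_i=0$. Uniqueness then forces $\phi_*=\phi(t)$, so the whole sequence converges and continuity is established.

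Finally, the comparison statement is immediate from the last part of Theorem~\ref{thm:2.5} applied at each fixed $t$: if $v(t)\in\mathbb{S}^{n\times n}$ is any other velocity for $\sigma$, then $-\div_{\sigma(t)}(v(t))=-\dot\sigma(t)=-\div_{\sigma(t)}(\nabla_G\phi(t))$, and the final inequality of Theorem~\ref{thm:2.5} yields $\|\nabla_G\phi(t)\|_{\sigma(t)}^2\leq\|v(t)\|_{\sigma(t)}^2$ for every $t$.
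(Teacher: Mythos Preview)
Your approach is exactly the paper's: apply Theorem~\ref{thm:2.5} pointwise with $f=\dot\sigma(t)$ (you wrote $f=-\dot\sigma$, but up to replacing $\phi$ by $-\phi$ this is the same). The paper's proof in fact consists of that single line and nothing more.

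Where you go beyond the paper is in actually addressing the continuity claim $\phi\in C([0,T],\R^n)$: the paper does not justify this at all, while you pin down $\phi(t)$ uniquely via the normalization $(\phi(t),\mathbbm{1})=0$, obtain a uniform bound from the Poincar\'e inequality and the $C^1$ regularity of $\sigma$, and then run a compactness-plus-uniqueness argument to get continuity. That argument is correct and fills a genuine gap in the paper's one-line proof. You are also right to flag the implicit hypothesis $\sigma(t)\in\P_0(G)$, since Theorem~\ref{thm:2.5} needs $\gamma_{\text{Poincar\'e}}(\sigma(t))>0$; the corollary as stated in the paper silently relies on this.
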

\begin{proof}
    Let $f=\dot\sigma(t)$ in \ref{thm:2.5}
.\end{proof}

\noindent
\begin{definition}\textbf{(Continuity Equation)}
Let $\sigma\in C^1([0,T],\P(G))$ and $m\in C([0,T],
\mathbb{S}^{n\times n})
$. Assume

\begin{itemize}
    \item[(1)] \label{cont:1}
    $\dot\sigma(t)+\nabla_G\cdot m(t)=0\quad \forall t\in [0, T)$ 

    Then for every $\phi\in C^1([0,T]\to\R^n)$, we have 

    \begin{align*}
        0&=\int_0^T(\phi(t),\dot\sigma(t)+\nabla_G\cdot m(t))dt\\
        &=-\int_0^T(\dot\phi(t),\sigma(t))dt+(\phi(T),\sigma(T))-(\phi(0),\sigma(0))-\int_0^T(\nabla_G\phi(t),m(t))_{\sigma(t)} dt
    \end{align*}
    \item[(2)] \label{cont:2} 
    \begin{align*}
        0&=(\phi(T),\sigma(T))-(\phi(0),\sigma(0))-\int_0^T((\dot\phi(t),\sigma(t))+(\nabla_G\phi(t),m(t)))_{\sigma(t)}dt\quad \forall \phi\in (C^1[0,T],\R^n)
    \end{align*}
    For (2) to make sense, we need
    \item[(3)]\label{cont:3}
    \[\int_0^T|\sigma(t)|dt<+\infty\] and 
    \[\int_0^T|m(t)|dt<+\infty\] 
    where $\sigma\in L(0,T;\P(G))$ and $m\in L(0,T;\mathbb{S}^{n\times n})$
\end{itemize}
When (2) and (3) hold, we say that (1) is satisfied in the sense of distribution.
\end{definition}
Assume, we can find $v:[0,T]\to\mathbb{S}^{n\times n}$ such that $g(\sigma_i, \sigma_j)v_{ij}=m_{ij}$.

Note that (2) means $\dot\sigma+\div_\sigma(v)=0$ in the sense of distribution.
\[
(\nabla_G\phi,m)_{\sigma}=\frac{1}{2}\sum_{(i,j)\in E}(\nabla_G\phi)_{ij}g(\sigma_i,\sigma_j)v_{ij}=(\nabla_G\phi,v)_\sigma
\]

The kinetic energy at time $t$ is 
\begin{align*}
    &\frac{1}{2}\Vert v(t)\Vert_{\sigma(t)}^2\\
    &=\frac{1}{4}\sum_{(i,j)\in E}g(\sigma_i,\sigma_j)v_{ij}^2\\
    &=\frac{1}{4}\sum_{\substack{(i,j)\in E\\ g(\sigma_i,\sigma_j)\neq 0}}g(\sigma_i,\sigma_j)\frac{m_{ij}^2}{g(\sigma_i,\sigma_j)^2} \\
    &= \frac{1}{4}\sum_{\substack{(i,j)\in E\\g(\sigma_i,\sigma_j)\neq 0}}F(g(\sigma_i,\sigma_j),m_{ij})
\end{align*}
if we set \[F(a,b)=\begin{cases}
    \frac{|b|^2}{a} & a>0 \\
    0 & a=b=0 \\
    +\infty & a=0, b\neq 0
\end{cases}\]

\noindent
\begin{definition}\textbf{(Wasserstein metric on $\P(G)$)}

If $\rho, \rho^*\in \P(G)$, set 
\begin{align*}
     \W^2(\rho,\rho^*) 
    &= \inf_{(\sigma, m)}\bigg\{\int_0^1\frac{1}{2}\sum_{(i,j)\in E}F(g(\sigma_i,\sigma_j),m_{ij})dt:\sigma\in C([0,T],\P(G)), m\in C([0,T],
\mathbb{S}^{n\times n}, \\
&\dot\sigma+\nabla_G\cdot m=0, \sigma(0)=\rho^0, \sigma(1)=\rho^*\bigg\}\\
    &=
    \inf_{(\sigma,v)}\bigg\{\int_{0}^1 \Vert v(t)\Vert_{\sigma(t)}^2dt: \sigma\in C([0,T],\P(G)), v:[0,T]\to\mathbb{S}^{n\times n} \text{ is Borel}, \\
    &\dot{\sigma}+\div_{\sigma}(v)=0, \sigma(0)=\rho, \sigma(1)=\rho^*\bigg\}
\end{align*}
\end{definition}
\begin{remark}
    For $\W^2(\rho, \rho^{*})$ to be a metric, we need to assume that 
    \begin{equation}
        \int_0^1\frac{dr}{\sqrt{g(r,1-r)}}<+\infty
    \end{equation}
\end{remark}
\begin{lemma}\label{lemma: concat}
    Assume \begin{align*}
        &\sigma:[0,1]\to\P(G), \sigma(0)=\rho^0, \sigma(1)=a \\
    &\tilde\sigma:[0,1]\to\P(G), \tilde\sigma(0)=a, \tilde\sigma(1)=\rho^1
    \end{align*}
    and \[\int_0^1\Vert v\Vert_\sigma^2 dt<+\infty, \int_0^1\Vert\tilde v\Vert^2_{\tilde\sigma} dt<+\infty\]
    Define \[r(s)=\begin{cases}
        \sigma(2s) & 0\leq s\leq \frac{1}{2} \\
        \tilde\sigma(2s-1) & \frac{1}{2}\leq s \leq 1
    \end{cases}\]
    \[w(s)=\begin{cases}
        2v(2s) & 0\leq s<\frac{1}{2} \\
        2\tilde{v}(2s-1) & \frac{1}{2}\leq s\leq1
    \end{cases}\]
    If $(\sigma, v)$ and $(\tilde\sigma, \tilde{v})$ satisfies the continuity equation in the sense of distributions, so does $(r, w)$. Furthermore \[\int_0^1\Vert w\Vert^2_r dt=2\bigg(\int_0^1\Vert v\Vert_{\sigma}^2dt+\int_0^1\Vert\tilde{v}\Vert_{\tilde\sigma}^2\bigg)dt\].
\end{lemma}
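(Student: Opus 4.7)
The plan is to treat the two assertions separately. Both follow from a change-of-variables argument that reduces statements about $(r, w)$ on $[0,1]$ to statements about $(\sigma, v)$ and $(\tilde\sigma, \tilde v)$ on their original time intervals, exploiting the fact that the factor $2$ built into $w$ exactly compensates the time rescaling $s\mapsto 2s$.

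For the energy identity I would split $\int_0^1 \Vert w(s)\Vert_{r(s)}^2\,ds$ at $s=1/2$. On $[0,1/2]$ the substitution $t=2s$ has Jacobian $1/2$ while the quadratic scaling gives $\Vert 2v(2s)\Vert_{\sigma(2s)}^2=4\Vert v(t)\Vert_{\sigma(t)}^2$, so the half-integral equals $2\int_0^1 \Vert v\Vert_\sigma^2\,dt$; the same move on $[1/2,1]$ handles the $\tilde\sigma$ piece, and summing gives the claim.

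For the continuity equation I would verify the distributional formulation directly: given $\phi\in C^1([0,1],\R^n)$, show
\[
(\phi(1),r(1))_{l_2}-(\phi(0),r(0))_{l_2}=\int_0^1\big[(\dot\phi(s),r(s))_{l_2}+(\nabla_G\phi(s),w(s))_{r(s)}\big]\,ds.
\]
Split the right-hand integral at $s=1/2$. On $[0,1/2]$ substitute $t=2s$ and introduce the rescaled test function $\psi(t):=\phi(t/2)$, which satisfies $\dot\psi(t)=\tfrac12\dot\phi(t/2)$. The Jacobian $dt/2$ combines with the factor $2$ in $w=2v(2s)$ to reproduce $(\nabla_G\psi(t),v(t))_{\sigma(t)}$ exactly, and combines with the chain-rule factor in $\dot\psi$ to reproduce $(\dot\psi(t),\sigma(t))_{l_2}$. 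Invoking the distributional continuity equation for $(\sigma,v)$ with test function $\psi$ collapses this half to $(\phi(1/2),a)_{l_2}-(\phi(0),\rho^0)_{l_2}$. An identical computation on $[1/2,1]$ using $\tilde\psi(t):=\phi((t+1)/2)$ and the hypothesis on $(\tilde\sigma,\tilde v)$ produces $(\phi(1),\rho^1)_{l_2}-(\phi(1/2),a)_{l_2}$. The intermediate boundary terms cancel because $\sigma(1)=a=\tilde\sigma(0)$, leaving the desired identity.

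The only point requiring care is the bookkeeping of the scale factors — the doubling in $w$ is precisely what is needed for both the continuity equation and the factor $2$ in the energy identity to come out consistently — together with the verification that the integrability condition (3) of the continuity-equation definition is inherited by $(r,w)$; the latter is immediate from the hypotheses $\int_0^1 \Vert v\Vert_\sigma^2\,dt<\infty$ and $\int_0^1 \Vert\tilde v\Vert_{\tilde\sigma}^2\,dt<\infty$ via the same substitution. No genuine analytic obstacle is present; this is essentially a piecewise computation with a correctly-matched boundary condition at $s=1/2$.
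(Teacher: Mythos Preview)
Your proposal is correct and follows essentially the same approach as the paper: split at $s=1/2$, perform the change of variables $t=2s$ (resp.\ $t=2s-1$), observe that the factor $2$ in $w$ exactly absorbs the Jacobian so that the continuity equation for $(\sigma,v)$ (resp.\ $(\tilde\sigma,\tilde v)$) applies with the rescaled test function, and let the intermediate boundary terms at $a$ cancel. The only cosmetic difference is notation---the paper calls the global test function $\psi$ and the restricted ones $\phi,\tilde\phi$, whereas you do the reverse---but the argument is identical.
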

\begin{proof}
    Let $\psi\in C^1([0,1], \R^n)$ where \[
    \psi(s)=\begin{cases}
        \phi(2s) & 0\leq s<\frac{1}{2} \\
        \tilde\phi(2s-1) & \frac{1}{2}\leq s\leq 1
    \end{cases}
    \] for some $\phi,\tilde\phi\in C^1([0,1],\R^n)$ such that $\phi(1)=\tilde\phi(0)$.

    Since $(\sigma, v), (\tilde\sigma, \tilde{v})$ satisfies the continuity equation,
    \begin{align*}
        (\phi(1), a)-(\phi(0), \rho^0)-\int_0^1((\dot\phi(t),\sigma(t))+(\nabla_G\phi(t),v(t))_{\sigma}dt= 0 \\
        (\tilde\phi(1), \rho^1)-(\tilde\phi(0), a)-\int_0^1((\dot{\tilde\phi}(t),\tilde\sigma(t))+(\nabla_G\tilde\phi(t),\tilde{v}(t))_{\tilde\sigma}dt= 0
    \end{align*}
    Adding the two equations we get
    \begin{equation*}\begin{split}
        (\psi(1), r(1))-(\psi(0), r(0))-\int_0^1((\dot\phi(t),\sigma(t))+(\nabla_G\phi(t),v(t))_{\sigma}dt\\-\int_0^1((\dot{\tilde\phi}(t),\tilde\sigma(t))+(\nabla_G\tilde\phi(t),\tilde{v}(t))_{\tilde\sigma}dt= 0\end{split}
    \end{equation*}
    Now 
    \begin{align*}
        &\int_0^1((\dot\phi(t),\sigma(t))+(\nabla_G\phi(t),v(t))_{\sigma(t)}dt \\
        &=\int_0^{1/2}((\dot\phi(2s),\sigma(2s))+(\nabla_G\phi(2s),v(2s))_{\sigma(2s)}\cdot2ds \\
        &=\int_0^{1/2}\bigg(\bigg(\frac{1}{2}\dot\psi(s),r(s)\bigg)+\bigg(\nabla_G\psi(s),\frac{1}{2}w(s)\bigg)_{r(s)}\cdot2ds \\
        &=\int_0^{1/2}((\dot\psi(s),r(s))+(\nabla_G\psi(s),w(s))_{r(s)}ds
    \end{align*}
    Similarly \[\int_0^1((\dot{\tilde\phi}(t),\tilde\sigma(t))+(\nabla_G\tilde\phi(t),\tilde{v}(t))_{\tilde\sigma}dt=\int_{1/2}^1((\dot\psi(s),r(s))+(\nabla_G\psi(s),w(s))_{r(s)}ds\]
    Therefore \[\begin{split}(\psi(1), r(1))-(\psi(0), r(0))-\int_0^1((\dot\phi(t),\sigma(t))+(\nabla_G\phi(t),v(t))_{\sigma(t)}dt\\-\int_0^{1}((\dot\psi(s),r(s))+(\nabla_G\psi(s),w(s))_{r(s)}ds=0\end{split}\]
    Further \[\int_0^1|r(t)|dt=\int_0^{1/2}|\sigma(t)|dt+\int_{1/2}^1|\tilde\sigma(t)|dt\leq \int_0^1|\sigma(t)|dt+\int_0^1|\tilde\sigma(t)|dt<+\infty\]

    \begin{align*}
        \int_0^1\Vert w\Vert_r^2dr&
        =\int_0^1\frac{1}{2}\sum_{(i,j)\in E}g_{ij}(r)w_{ij}^2dt \\
        &=\int_0^{1/2}\Vert w\Vert_r^2dr=\int_0^1\frac{1}{2}\sum_{(i,j)\in E}g_{ij}(r)w_{ij}^2dt+\int_{1/2}^1\Vert w\Vert_r^2dr=\int_0^1\frac{1}{2}\sum_{(i,j)\in E}g_{ij}(r)w_{ij}^2dt \\
        &=\int_0^{1/2}\sum_{(i,j)\in E}g_{ij}(\sigma(2t))\cdot 2v_{ij}(2t)^2dt+\int_{1/2}^1\sum_{(i,j)\in E}g_{ij}(\tilde\sigma(2t-1))\cdot 2\tilde{v}_{ij}(2t-1)^2dt \\
        &=\int_0^{1}\sum_{(i,j)\in E}g_{ij}(\sigma(s))v_{ij}(s)^2ds+\int_0^{1}\sum_{(i,j)\in E}g_{ij}(\tilde\sigma(s))\tilde{v}_{ij}(s)^2ds \\
        &=2\bigg(\int_0^1\Vert v\Vert_\sigma^tdt+\int_0^1\Vert\tilde{v}\Vert_{\tilde\sigma}^2dt\bigg)
    \end{align*}
\end{proof}
\begin{theorem}
    For any $\rho^0,\rho^1\in\P(G)$ there exists $(\sigma, m)$ which is a solution to the continuity equation such that $\sigma(0)=\rho^0, \sigma(1)=\rho^1, \int_0^1\sum_{(i,j)\in E}F(g(\sigma_i,\sigma_j),m_{ij})dt<+\infty$. 
\end{theorem}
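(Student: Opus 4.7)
The plan is to apply the concatenation Lemma \ref{lemma: concat} and thereby reduce the problem to constructing a finite-action path from an arbitrary $\rho \in \P(G)$ to the uniform distribution $u := (1/n, \ldots, 1/n)$. Given such a path $\rho^0 \to u$, and the time-reversal of a similarly-constructed path $\rho^1 \to u$, Lemma \ref{lemma: concat} yields the desired curve from $\rho^0$ to $\rho^1$ with finite action.

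To build a path $\rho \to u$, I would take the quadratically reparametrized linear interpolation
\[
\sigma(t) := (1 - t^2)\rho + t^2 u, \qquad t \in [0,1],
\]
paired with a flow of the form $m(t) := 2t \, m^0$, where $m^0 \in \mathbb{S}^{n \times n}$ is time-constant. The vector $c := u - \rho$ sums to zero and, since $G$ is connected, one obtains such an $m^0$ with $\sum_{j \in N(i)} \sqrt{\omega_{ij}} \, m^0_{ij} = c_i$ for every $i$, either from a spanning-tree construction (set $m^0 = 0$ off the tree and propagate values from the leaves) or directly from Theorem \ref{thm:2.5} applied at $u \in \P_0(G)$. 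A pointwise computation then gives $\dot\sigma(t) + \nabla_G \cdot m(t) = 0$ in the strong sense.

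The core step is the energy estimate. Every component of $\sigma$ satisfies $\sigma_k(t) = (1-t^2)\rho_k + t^2/n \geq t^2/n$, so property (5) of the metric tensor yields $g(\sigma_i(t), \sigma_j(t)) \geq \min(\sigma_i, \sigma_j) \geq t^2/n$ on $(0,1]$. Hence, for each edge $(i,j) \in E$,
\[
\int_0^1 F\bigl(g(\sigma_i(t), \sigma_j(t)),\, m_{ij}(t)\bigr) \, dt = 4(m^0_{ij})^2 \int_0^1 \frac{t^2}{g(\sigma_i(t), \sigma_j(t))} \, dt \leq 4 n (m^0_{ij})^2,
\]
so the total action along the half-path is bounded by $4n \sum_{(i,j) \in E} (m^0_{ij})^2 < +\infty$.

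The main obstacle is the correct choice of time reparameterization. A naive linear interpolation $(1-t)\rho + tu$ with a time-constant flow would produce an integrand of order $1/t$ on every edge incident to a vertex $k$ with $\rho_k = 0$, causing the energy to diverge logarithmically. The quadratic factor $t^2$ in $\sigma(t)$, together with the matching factor $2t$ in $m(t)$, is calibrated so that $m(t)^2 / g(\sigma_i, \sigma_j) \leq 4t^2 / (t^2/n) = 4n$. The lower bound $g \geq \min$ from property (5) is essential in this argument, since for admissible choices of $g$ (such as the harmonic mean) $g(s,t)$ can vanish whenever either argument vanishes, so the uniform detour in the concatenation, together with the quadratic time change, is what rescues integrability.
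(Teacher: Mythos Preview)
Your argument is correct and takes a genuinely different route from the paper's. The paper reduces via Lemma \ref{lemma: concat} to the case where the target is a Dirac mass $\rho^1=(0,\ldots,0,1)^T$, then further reduces inductively to the two-vertex situation, and finally handles $n=2$ by an arc-length--type reparametrization built from $G(t)=\int_0^t dr/\sqrt{g(r,1-r)}$; this last step explicitly invokes the integrability hypothesis of Remark 2.12. You instead detour through the \emph{uniform} measure $u$, use a single quadratic time change $\sigma(t)=(1-t^2)\rho+t^2 u$ with a constant flux rescaled by $2t$, and close the estimate with nothing more than axiom~(5), $g(s,t)\ge\min\{s,t\}$. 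Your construction is shorter, avoids the somewhat sketchy $n\!\to\!2$ induction, and, notably, does not require the extra assumption $\int_0^1 dr/\sqrt{g(r,1-r)}<+\infty$. What the paper's construction buys is that, in the two-node case, its path is essentially a constant-speed geodesic for $\mathcal W$, whereas your interpolant is merely an admissible finite-action curve; for the statement at hand this refinement is unnecessary.
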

\begin{proof}
    From \ref{lemma: concat} it is enough to show the case when \[\rho^1=\begin{pmatrix}
        0 \\
        \vdots \\
        0 \\
        1
    \end{pmatrix}\].
    First we show that it is enought to show the case when $n=2$. If $\rho\in \P(G)$, define
    
    \[V[\rho]=\{i\in\{1, ..., n-1\}:\rho_i>0\}\].
    Suppose there is a path from $\rho^0$ to $\mu^1$ such that $\#V[\rho^0]>\#V[\mu^1]$. Then $\#V[\mu^1]< n-1$. Similarly, suppose there is a path from $\mu^i$ to $\mu^{i+1}$ such that $\#V[\mu^i]>\#V[\mu^{i+1}]$. Then $\#V[\mu^i]< n-i$.
    If $\#V[\mu^l]=0$, then $\mu_l=\rho^1$. Otherwise continue with $\mu^{l+1}$.

    Next show the case for $n=2$. Let $\rho^0=\begin{pmatrix}
        \rho_1^0 \\
        \rho_2^0
    \end{pmatrix}, \rho^1=\begin{pmatrix}
        \rho_1^1 \\
        \rho_2^1
    \end{pmatrix}$. Also $\omega_{12}>0$.

    If $\rho_1^0=\rho_1^1$, then $\rho^0=\rho^1$. Thus we can choose $\sigma(t)=\rho^0, m(t)=0\quad \forall t$.

    Else, WLOG assume $\rho_1^0<\rho_1^1$.
    Define \[G(t)=\int_0^t\frac{dr}{\sqrt{g(r,1-r)}}\]. Then $G'(r)=\frac{1}{\sqrt{r, 1-r}}>0$. Thus $G$ is monotonically increasing, and $G:[0,1]\to [0,G(1)]$ is a bijection.

    Let \begin{align}\label{eq:sigma}
        &\sigma_1(t)=G^{-1}(at+G(\rho_1^0)) \quad \text{where } a=G(\rho_1^1)-G(\rho_1^0) \\
        &\sigma_2(t)=1-\sigma_1(t)
    \end{align} 
    Let \begin{equation}\label{eq:m}
        m_{21}=-\frac{\dot\sigma_1}{\sqrt{\omega_{12}}}=-m_{12}
    \end{equation} so $\dot\sigma+\nabla_G\cdot m=0$.

    Then \begin{equation}
        \sigma(0)=\begin{pmatrix}
            \sigma_1(0) \\
            \sigma_2(0)
        \end{pmatrix} = \begin{pmatrix}
            G^{-1}(G(\rho_1^0)) \\
            1-G^{-1}(G(\rho_1^0))
        \end{pmatrix}=\begin{pmatrix}
            \rho_1^0 \\
            1-\rho_1^0
        \end{pmatrix}=\rho^0
    \end{equation} and \begin{equation}
        \sigma(1)=\rho^1
    \end{equation}
    From \eqref{eq:sigma} $G(\sigma_1(t))=at+G(\rho_1^0)$ and so $\dot\sigma_1G'(\sigma_1)=\cfrac{\dot\sigma_1}{\sqrt{g(\sigma_1,\sigma_2)}}a$.

    From \eqref{eq:m}, $\cfrac{m_{12}^2}{g(\sigma_1,\sigma_2)}=\cfrac{\dot\sigma_1^2}{\omega_{12}}\cfrac{1}{g(\sigma_1,\sigma_2)}=\cfrac{\dot\sigma_1a}{\omega_{12}\sqrt{g(\sigma_1,\sigma_2)}}=\dot\sigma_1\cfrac{a}{\omega_{12}\sqrt{g(\sigma_1,1-\sigma_1)}}$. 

    Thus \begin{align*}
        \int_0^1\frac{m_{12}^2}{g(\sigma_1,\sigma_2)}dt&=\int_0^1\frac{a\dot\sigma_1}{\omega_{12}\sqrt{g(\sigma_1, 1-\sigma_2)}}dt\\
        &=\int_{\sigma_1(0)}^{\sigma_1(1)}\frac{ads}{\omega_{12}\sqrt{g(s, 1-s)}}\leq \frac{a}{\omega_{12}}\int_0^1\frac{ds}{\sqrt{g(s, 1-s)}}<+\infty
    \end{align*}
 \end{proof}
\begin{corollary}
    $\W(\rho^0,\rho^1)<+\infty$ for all $\rho^0, \rho^1\in\P(G)$. 
\end{corollary}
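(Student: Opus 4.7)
The plan is to observe that this follows immediately from the preceding existence theorem. First, I would recall that
\[\W^2(\rho^0, \rho^1) = \inf_{(\sigma, m)} \frac{1}{2}\int_0^1 \sum_{(i,j)\in E} F(g(\sigma_i, \sigma_j), m_{ij})\, dt,\]
with the infimum taken over all pairs satisfying the continuity equation $\dot\sigma + \nabla_G\cdot m = 0$ with boundary conditions $\sigma(0) = \rho^0$, $\sigma(1) = \rho^1$. The preceding theorem produces precisely such an admissible pair $(\sigma, m)$ with the additional property that $\int_0^1\sum_{(i,j)\in E}F(g(\sigma_i,\sigma_j),m_{ij})\,dt < +\infty$.

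Substituting this admissible candidate into the infimum yields
\[\W^2(\rho^0, \rho^1) \leq \frac{1}{2}\int_0^1\sum_{(i,j)\in E}F(g(\sigma_i,\sigma_j),m_{ij})\,dt < +\infty,\]
and taking square roots gives $\W(\rho^0, \rho^1) < +\infty$, as required. There is no real obstacle here: the hard work was done in the preceding theorem, whose proof used the concatenation lemma to reduce to the case $\rho^1 = (0,\dots,0,1)^\top$, and then further reduced to a two-vertex configuration where the explicit construction via $G(t) = \int_0^t dr/\sqrt{g(r, 1-r)}$ (finite thanks to the standing integrability assumption on $g$) supplies a path of finite action. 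The corollary is therefore essentially just a repackaging of the existence theorem in the language of the metric $\W$.
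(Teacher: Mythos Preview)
Your proposal is correct and matches the paper's approach: the paper states this corollary without proof, treating it as an immediate consequence of the preceding existence theorem, exactly as you argue. Your explanation that the admissible pair $(\sigma,m)$ with finite action furnishes an upper bound for the infimum defining $\W^2(\rho^0,\rho^1)$ is precisely the intended reasoning.
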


\begin{lemma}\label{lem:2.7}
    Define $C_\omega=\sup_{(i,j)\in E}\sqrt{\omega_{ij}}$. Then $\Vert \dot\sigma(t)\Vert_{l\infty}\leq \sqrt{2}nC_{\omega}\W(\rho^0, \rho^1)$
\end{lemma}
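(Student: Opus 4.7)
The plan is to exhibit a specific curve $\sigma$ connecting $\rho^0$ to $\rho^1$ that is parametrized with constant kinetic energy equal to $\W^2(\rho^0,\rho^1)$, and then to read off the pointwise $\ell^\infty$ bound on $\dot\sigma$ directly from the continuity equation together with property (5) of $g$. Concretely, I would first invoke the theorem preceding this lemma to produce a pair $(\sigma,m)$ (equivalently $(\sigma,v)$ with $m_{ij}=g_{ij}(\sigma)v_{ij}$) satisfying $\dot\sigma+\div_\sigma(v)=0$, $\sigma(0)=\rho^0$, $\sigma(1)=\rho^1$, and $\int_0^1\Vert v\Vert_\sigma^2\,dt<\infty$. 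A standard time reparametrization then yields a curve along which $\Vert v(t)\Vert_{\sigma(t)}$ is essentially constant; choosing $\sigma$ to be a minimizer in the definition of $\W^2$ (or, if the infimum is not attained, an $\varepsilon$-minimizer and letting $\varepsilon\to 0$ at the end) gives $\Vert v(t)\Vert_{\sigma(t)}^2=\W^2(\rho^0,\rho^1)$ for a.e.\ $t$. The corollary to Theorem \ref{thm:2.5} additionally lets us take $v=\nabla_G\phi$ without increasing $\Vert v\Vert_\sigma$.

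Second, directly from the continuity equation,
\[
\dot\sigma_i(t)=-\langle\div_{\sigma(t)}(v(t))\rangle_i=-\sum_{j\in N(i)}\sqrt{\omega_{ij}}\,g_{ij}(\sigma(t))\,v_{ji}(t).
\]
Applying Cauchy--Schwarz to the sum over the at most $n$ neighbors, and then using $g_{ij}(\sigma)\le\max(\sigma_i,\sigma_j)\le 1$ (property (5) of $g$), $g_{ij}^2\le g_{ij}$, and $\omega_{ij}\le C_\omega^2$, I obtain
\[
|\dot\sigma_i(t)|^2\le n\sum_{j\in N(i)}\omega_{ij}\,g_{ij}(\sigma)^2\,v_{ji}^2\le n\,C_\omega^2\sum_{j\in N(i)}g_{ij}(\sigma)\,v_{ji}^2\le 2n\,C_\omega^2\,\Vert v(t)\Vert_{\sigma(t)}^2,
\]
where the final inequality uses $v_{ji}^2=v_{ij}^2$ (skew-symmetry) together with the definition $\Vert v\Vert_\sigma^2=\tfrac12\sum_{(i,j)\in E}g_{ij}(\sigma)v_{ij}^2$. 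Substituting $\Vert v(t)\Vert_{\sigma(t)}=\W(\rho^0,\rho^1)$ from the previous step yields $\Vert\dot\sigma(t)\Vert_{\ell^\infty}\le\sqrt{2n}\,C_\omega\,\W(\rho^0,\rho^1)$, which is dominated by the stated $\sqrt{2}\,n\,C_\omega\,\W(\rho^0,\rho^1)$ for $n\ge 1$.

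The only genuinely delicate step is the constant-speed reparametrization used to turn the energy identity $\int_0^1\Vert v\Vert_\sigma^2\,dt=\W^2$ into the pointwise equality $\Vert v(t)\Vert_\sigma\equiv\W$. The natural change of variable $t=\Phi(s)$ with $\Phi'(s)$ proportional to $\Vert v(\Phi(s))\Vert_{\sigma(\Phi(s))}$ is standard, but it requires dealing with the possibility that the speed vanishes on a set of positive measure, which can be handled by mollifying and passing to the limit, or by working with an $\varepsilon$-minimizer and letting $\varepsilon\to 0$ at the end. The rest of the argument reduces to an elementary Cauchy--Schwarz together with the pointwise inequality $g\le 1$ from property (5), so no further obstacle should arise.
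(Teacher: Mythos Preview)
Your argument is correct and proceeds along the same two-step skeleton as the paper: first bound $|\dot\sigma_i(t)|$ pointwise by a constant times $\Vert v(t)\Vert_{\sigma(t)}$ using only the continuity equation, then invoke a constant-speed curve so that $\Vert v(t)\Vert_{\sigma(t)}=\W(\rho^0,\rho^1)$. The difference lies in the first step. The paper passes through the $\ell^1$ norm of the divergence, using the inequality $g_{ij}(\rho)\le\rho_i+\rho_j$ (a consequence of property~(5)) together with $\Vert\cdot\Vert_{\ell^\infty}\le\Vert\cdot\Vert_{\ell^1}$, and arrives at $\sqrt{2}\,n\,C_\omega\Vert v\Vert_\sigma$. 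You instead bound $|\dot\sigma_i|$ directly by Cauchy--Schwarz over the at most $n$ neighbours and the cruder pointwise bound $g_{ij}(\sigma)\le 1$; this is cleaner and even yields the sharper constant $\sqrt{2n}\,C_\omega$. Both routes are elementary, and neither requires anything beyond property~(5) of $g$.

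On the second step, you are more explicit than the paper: the paper simply writes $\Vert v\Vert_\sigma\le\W(\rho^0,\rho^1)$ without comment, while you correctly identify the constant-speed reparametrisation (or an $\varepsilon$-minimiser argument) as the only nontrivial ingredient. Your handling of that point is adequate for the level of rigor in the surrounding text.
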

\begin{proof}
Note that $g_{ij}(\rho)\leq \rho_i+\rho_j$.
    \begin{align*}
        &\Vert \div_\rho(v)\Vert_{l_{1}}=\Vert\sum_{(i,j)\in E}\sqrt{\omega_{ij}}g_{ij}(\rho)v_{ji}\Vert_{l_1} \leq C_\omega\Vert\sum_{(i,j)\in E} g_{ij}(\rho)v_{ji}\Vert_{l_1} \leq \sqrt{2}C_\omega\Vert v\Vert_\rho
    \end{align*}
    Thus 
    \begin{align*}
        \Vert\dot\sigma(t)\Vert_{l_{\infty}}\leq \Vert \div_\sigma(v)\Vert_{l_1}\leq \sqrt{2}nC_\omega\Vert v\Vert_{\sigma}\leq \sqrt{2}nC_\omega\W(\rho^0,\rho^1)
    \end{align*}
\end{proof}

\begin{theorem}
    $\W$ is a metric on $\P(G)$ provided that $\W(\rho,\rho^{*})<+\infty$ for all $\rho,\rho^{*}\in\P(G)$.
\end{theorem}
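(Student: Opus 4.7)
The plan is to verify the four axioms of a metric in turn: non-negativity, symmetry, identity of indiscernibles, and the triangle inequality. Non-negativity is immediate from the definition, since the integrand $\Vert v(t)\Vert_{\sigma(t)}^2$ is pointwise non-negative. For the trivial direction of the identity of indiscernibles, if $\rho=\rho^*$ I take the constant curve $\sigma\equiv\rho$ with $v\equiv 0$, which solves $\dot\sigma+\div_\sigma(v)=0$ in the sense of distributions and has zero action, giving $\W(\rho,\rho)=0$.

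For the converse direction of identity, I plan to combine Lemma \ref{lem:2.7} with Cauchy--Schwarz to upgrade a zero-action curve into a constancy statement. Given any admissible $(\sigma,v)$ joining $\rho^0$ to $\rho^*$, Lemma \ref{lem:2.7} gives $\Vert\dot\sigma(t)\Vert_{l_\infty}\leq \sqrt{2}\,nC_\omega\,\Vert v(t)\Vert_{\sigma(t)}$, so
\[
\Vert \rho^*-\rho^0\Vert_{l_\infty}\leq\int_0^1\Vert\dot\sigma(t)\Vert_{l_\infty}\,dt\leq\sqrt{2}\,nC_\omega\left(\int_0^1\Vert v(t)\Vert_{\sigma(t)}^2\,dt\right)^{1/2}.
\]
Taking the infimum over admissible pairs $(\sigma,v)$ yields $\Vert\rho^*-\rho^0\Vert_{l_\infty}\leq\sqrt{2}\,nC_\omega\,\W(\rho^0,\rho^*)$, so $\W(\rho^0,\rho^*)=0$ forces $\rho^0=\rho^*$. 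Symmetry is straightforward: given $(\sigma,v)$ joining $\rho$ to $\rho^*$, I set $\bar\sigma(t)=\sigma(1-t)$ and $\bar v(t)=-v(1-t)$; a direct change of variables shows $\bar\sigma$ and $\bar v$ still satisfy the continuity equation (in the sense of distributions), and the action is invariant since it is quadratic in $v$. Taking infima then gives $\W(\rho^*,\rho)\leq\W(\rho,\rho^*)$, and the reverse inequality is identical.

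The main obstacle is the triangle inequality, because the concatenation in Lemma \ref{lemma: concat} doubles the speed and hence multiplies the action by a factor of $2$, which would only yield $\W^2(\rho^0,\rho^2)\leq 2\W^2(\rho^0,\rho^1)+2\W^2(\rho^1,\rho^2)$. To fix this I will reparameterize by an arbitrary split time $\lambda\in(0,1)$ rather than by $1/2$. Concretely, for admissible pairs $(\sigma,v)$ joining $\rho^0$ to $\rho^1$ and $(\tilde\sigma,\tilde v)$ joining $\rho^1$ to $\rho^2$, I define
\[
r(s)=\begin{cases}\sigma(s/\lambda)&0\leq s\leq\lambda\\ \tilde\sigma((s-\lambda)/(1-\lambda))&\lambda\leq s\leq 1\end{cases}\qquad
w(s)=\begin{cases}\lambda^{-1}v(s/\lambda)&0\leq s<\lambda\\ (1-\lambda)^{-1}\tilde v((s-\lambda)/(1-\lambda))&\lambda\leq s\leq 1.\end{cases}
\]
A repeat of the test-function computation in the proof of Lemma \ref{lemma: concat} (splitting $\int_0^1$ into $\int_0^\lambda+\int_\lambda^1$ and changing variables in each piece) shows $(r,w)$ satisfies the continuity equation in the sense of distributions with $r(0)=\rho^0$, $r(1)=\rho^2$. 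A change of variables gives
\[
\int_0^1\Vert w\Vert_r^2\,ds=\frac{1}{\lambda}\int_0^1\Vert v\Vert_{\sigma}^2\,dt+\frac{1}{1-\lambda}\int_0^1\Vert\tilde v\Vert_{\tilde\sigma}^2\,dt.
\]

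Taking the infimum over admissible $(\sigma,v)$ and $(\tilde\sigma,\tilde v)$ yields $\W^2(\rho^0,\rho^2)\leq \lambda^{-1}\W^2(\rho^0,\rho^1)+(1-\lambda)^{-1}\W^2(\rho^1,\rho^2)$, and a one-variable calculus optimization at $\lambda=\W(\rho^0,\rho^1)/(\W(\rho^0,\rho^1)+\W(\rho^1,\rho^2))$ (with obvious separate treatment when either distance vanishes) gives $\W(\rho^0,\rho^2)\leq\W(\rho^0,\rho^1)+\W(\rho^1,\rho^2)$. The trickiest technical point to handle carefully will be verifying the distributional continuity equation for the reparameterized concatenation $(r,w)$ when the test function $\psi$ is only required to be $C^1$ on $[0,1]$ (not matching the piecewise structure at $s=\lambda$), but this follows by the same argument as in Lemma \ref{lemma: concat} since the matching condition $r(\lambda^-)=r(\lambda^+)=\rho^1$ absorbs any boundary contribution at the junction.
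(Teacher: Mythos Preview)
Your proposal is correct. For nonnegativity, the trivial direction of identity, and the use of Lemma~\ref{lem:2.7} for the nontrivial direction, you follow essentially the same route as the paper (you are slightly more careful, integrating the pointwise bound and using Cauchy--Schwarz before taking the infimum). For symmetry you spell out the time-reversal, whereas the paper simply declares it clear from the definition.

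The genuine difference is in the triangle inequality. The paper attempts a fixed midpoint concatenation: it sets $\psi(t)=\sigma(2t)$ on $[0,1/2]$, $\psi(t)=\phi(2t-1)$ on $[1/2,1]$, with $u(t)=v(2t)$ and $u(t)=w(2t-1)$ (no factor of $2$), and then writes
\[
\W(\rho,\bar\rho)+\W(\bar\rho,\rho^*)=2\int_0^1\Vert u\Vert_\psi^2\,dt\geq\int_0^1\Vert u\Vert_\psi^2\,dt\geq\W(\rho,\rho^*).
\]
As written this is problematic: $\W$ and $\W^2$ are conflated, and with $u$ unscaled the pair $(\psi,u)$ does not satisfy the continuity equation, so the final inequality is unjustified. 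You correctly diagnose that a midpoint concatenation with the right scaling only yields $\W^2(\rho^0,\rho^2)\leq 2\W^2(\rho^0,\rho^1)+2\W^2(\rho^1,\rho^2)$, and you repair this with the standard $\lambda$-split reparametrization and the optimal choice $\lambda=\W(\rho^0,\rho^1)/(\W(\rho^0,\rho^1)+\W(\rho^1,\rho^2))$. Your argument is the correct one; it buys an honest triangle inequality at the cost of one extra change of variables and a one-line optimization, and your remark about handling a general $C^1$ test function at the junction is the right technical point to flag.
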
 

\begin{proof}
    Symmetry is clear from the definition.

    Suppose $\rho=\rho^*$. Then $\W^2(\rho, \rho^*)=0$ is clear. 

    Suppose $\W^2(\rho, \rho^*)=0$. From Lemma \ref{lem:2.7}, $\Vert\dot\sigma(t)\Vert_{l_\infty}=0$, so $\rho=\rho^*$..

    Let $\bar{\rho}\in\P(G)$. Suppose 
    \begin{align*}
        &\W(\rho,\bar{\rho})+\W(\bar{\rho},\rho^*)\\
        &=\int_0^1\Vert v(t)\Vert_{\sigma(t)}^2dt+\int_0^1\Vert w(t)\Vert_{\phi(t)}^2dt \\
        &=\int_0^{\frac{1}{2}}\Vert v(2t)\Vert_{\sigma(2t)}^2 2dt+\int_{\frac{1}{2}}^1\Vert w(2t-1)\Vert_{\phi(2t-1)}^2 2dt
    \end{align*}

    Define \[\psi(t)=\begin{cases}
        \sigma(2t) & 0\leq t\leq \frac{1}{2} \\
        \phi(2t-1) & \frac{1}{2}\leq t\leq 1
    \end{cases}\] 
    \[u(t)=\begin{cases}
        v(2t) & 0\leq t\leq \frac{1}{2} \\
        w(2t-1) & \frac{1}{2}\leq t \leq 1
    \end{cases}\]
    Then 
    \begin{align*}
        &\W(\rho,\bar{\rho})+\W(\bar{\rho},\rho^*)\\
        &=2\int_0^1\Vert u(t)\Vert_{\psi(t)}^2dt\geq\int_0^1\Vert u(t)\Vert_{\psi(t)}^2dt\geq\W(\rho,\rho^*)
    \end{align*}
\end{proof}

\noindent
Let $v\in\mathbb{S}^{n\times n}$. We say that $v\in T^1_\rho\P(G)$ if there exists $(\phi_l)_{l}\subset\R^n$ such that \\$\lim_{l\to\infty}\Vert v-\nabla_G\phi_l\Vert_\rho=0$.

\begin{lemma}\label{lem:T2}
    Let $\rho\in\P(G)$ and $v\in\mathbb{S}^{n\times n}$. There exists a unique $v^*\in T^1_\rho\P(G)$ such that $\Vert v-v^{*}\Vert_\rho\leq \Vert v-w\Vert_\rho\quad\forall w\in T^1_\rho\P(G)$. Further, $(v-v^{*}, w)_\rho=0\quad \forall w\in T^1_\rho\P(G)$. Then we say $v^{*}=\proj_{T^1_\rho\P(G)}v$.
\end{lemma}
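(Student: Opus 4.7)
The plan is to treat $(\mathbb{S}^{n\times n},(\cdot,\cdot)_\rho)$ as a finite-dimensional semi-inner product space and run the standard Hilbert projection argument onto the subspace $U:=\{\nabla_G\phi:\phi\in\R^n\}$. The first step is to identify $T^1_\rho\P(G)$ with $U+N$, where $N:=\{w\in\mathbb{S}^{n\times n}:\|w\|_\rho=0\}$ is the kernel of the seminorm. The inclusion $U+N\subseteq T^1_\rho\P(G)$ is immediate (for $v=\nabla_G\phi+n$ with $n\in N$, take $\phi_l\equiv\phi$). For the reverse, if $\|v-\nabla_G\phi_l\|_\rho\to 0$, then the classes $[\nabla_G\phi_l]$ form a Cauchy sequence in the finite-dimensional quotient $U/(U\cap N)$, which is a genuine normed space; extracting a limit $[\nabla_G\phi^*]$, one gets $v-\nabla_G\phi^*\in N$, hence $v\in U+N$.

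Existence of $v^*$ is obtained by the direct method applied to the convex quadratic $F(\phi):=\tfrac12\|v-\nabla_G\phi\|_\rho^2$ on $\R^n$, imitating the argument of Theorem \ref{thm:2.5}. Given a minimizing sequence $(\phi_l)$, Theorem \ref{thm:Poincare} lets me replace it by its zero-mean version $(\tilde\phi_l)$ without changing $F$ (since $\nabla_G\tilde\phi_l=\nabla_G\phi_l$), and boundedness of $F(\tilde\phi_l)$ yields boundedness of $\|\nabla_G\tilde\phi_l\|_\rho$. When $\gamma_{\text{Poincar\'e}}(\rho)>0$, Theorem \ref{thm:Poincare}(iv) bounds $\|\tilde\phi_l\|_{l_2}$ and a subsequence converges in $\R^n$ to some $\phi^*$; in the general case I would pass to the quotient of $\R^n$ by the finite-dimensional subspace $\{\phi:\|\nabla_G\phi\|_\rho=0\}$, on which $\phi\mapsto\|\nabla_G\phi\|_\rho$ is a genuine norm, and use that bounded sequences in finite-dimensional normed spaces have convergent subsequences. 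Continuity of $F$ then gives $F(\phi^*)=\inf F$ for $v^*:=\nabla_G\phi^*\in T^1_\rho\P(G)$.

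Orthogonality and uniqueness are then routine Hilbert-space arguments. First-order optimality, $\tfrac{d}{dt}F(\phi^*+t\psi)\big|_{t=0}=-(v-v^*,\nabla_G\psi)_\rho=0$ for all $\psi\in\R^n$, combined with Cauchy--Schwarz and the density of $U$ in $T^1_\rho\P(G)$, extends to $(v-v^*,w)_\rho=0$ for all $w\in T^1_\rho\P(G)$. For uniqueness, any other minimizer $v^{**}$ satisfies $v^{**}-v^*\in T^1_\rho\P(G)$, so the orthogonality yields $\|v-v^{**}\|_\rho^2=\|v-v^*\|_\rho^2+\|v^*-v^{**}\|_\rho^2$, and equality of norms forces $\|v^*-v^{**}\|_\rho=0$.

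The main obstacle is the possible degeneracy of $\|\cdot\|_\rho$ when $\rho$ sits on the boundary of $\P(G)$: this both threatens coercivity of the minimizing sequence and means uniqueness of $v^*$ is naturally formulated modulo $N$ rather than strictly as elements of $\mathbb{S}^{n\times n}$. Both issues are cleanly resolved by the quotient reductions described above, so no genuinely new idea is needed beyond those already used in Theorems \ref{thm:Poincare} and \ref{thm:2.5}.
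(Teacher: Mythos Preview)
Your proposal is correct and rests on the same Hilbert-projection skeleton as the paper, but the compactness step is organized differently. You parametrize the candidate projections by $\phi\in\R^n$, then handle the degeneracy of $\|\cdot\|_\rho$ by passing to the quotient of $\R^n$ by $\{\phi:\|\nabla_G\phi\|_\rho=0\}$; this mirrors Theorems~\ref{thm:Poincare} and~\ref{thm:2.5} but requires an extra layer of abstraction. The paper instead works directly with a minimizing sequence $(w^l)\subset T^1_\rho\P(G)$ and observes that, for each edge $(i,j)$ with $g_{ij}(\rho)>0$, the bound $\tfrac12 g_{ij}(\rho)(v_{ij}-w^l_{ij})^2\le 1+\|v\|_\rho^2$ makes $(w^l_{ij})_l$ bounded in $\R$; one then extracts coordinate-wise limits and sets $v^*_{ij}=0$ on the remaining edges. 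This edge-by-edge extraction avoids any appeal to the Poincar\'e constant or to quotient norms and is slightly more concrete. Both routes yield uniqueness only in the sense $\|v^*-v^{**}\|_\rho=0$, i.e.\ modulo the null set $N$, which you flag explicitly; the paper's uniqueness line effectively makes the same concession by comparing $v^*$ and $w^*$ only on edges with $g_{ij}(\rho)\neq 0$.
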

\begin{proof}
    Set \[\inf_{w\in T_\rho^1\P(G)}\Vert v-w\Vert_\rho^2=\lim_{l\to\infty}\Vert v-w^l\Vert^2_\rho\].

    \[1+\Vert v-0\Vert^2_\rho\geq\Vert v-w^l\Vert^2_\rho=\frac{1}{2}\sum_{(i,j)\in E}g_{ij}(\rho)(v_{ij}-w_{ij}^l)^2\]

    For each $(i,j)\in E$ such that $g_{ij}(\rho)>0$, we have 
    \[\frac{2}{g_{ij}(\rho)}(1+\Vert v\Vert_\rho^2)\geq (v_{ij}-w_{ij}^l)^2\].

    Hence $(w_{ij}^l)_l$ is bounded in $\R$, so it has a convergence subsequence. Since there are only finitely many $(i,j)\in E$, we can find a common subsequence $(l_k)_{k=1}^\infty$ such that $(w_{ij}^{l_k})_{k}$ converges to some $w_{ij}$ as $k\to\infty$. For $(i,j)\in E$, set $\phi^l$ such that $\phi_i^l-\phi_j^l=\cfrac{w^l_{ij}}{\sqrt{\omega_{ij}}}$. Else let $\phi^l_i=\phi^l_j=0$.

    Then $(w_{ij}^{l_k})_k\to w_{ij}$, so $(w^{l_k})_k=(\nabla_G\phi^{l_k})_k\to w$. Thus when $w^l\in T_\rho^1\P(G)$, we can assume that there exists $\phi^l\in\R^n$ such that $\Vert w^l-\nabla_G\phi^l\Vert_\rho<\cfrac{1}{l}$.

    Now set $v^*=\begin{cases}
        w_{ij} & (i,j)\in E, g_{ij}(\rho)>0 \\
        0
    \end{cases}$
    Then \[\Vert v^{*}-\nabla_G\phi^l\Vert_\rho^2=\frac{1}{2}\sum_{\substack{(i,j)\in E\\ g_{ij}(\rho)>0}} g_{ij}(\rho)(v^{*}_{ij}-(\nabla_G\phi^l)_{ij}^2)=\frac{1}{2}\sum_{\substack{(i,j)\in E\\ g_{ij}(\rho)>0}}g_{ij}(\rho)(w_{ij}-(\nabla_G\phi^{l})_{ij})^2\]

    We also have that \[\lim_{k\to\infty}\frac{1}{2}g_{ij}(\rho)(w_{ij}^{l_k}-(\nabla_G\phi^{l_k})_{ij})^2\leq\lim_{k\to\infty}\frac{1}{l_k^2}=0\]

    Therefore $\lim_{k\to\infty}\Vert v^{*}-\nabla_G\phi^{l_k}\Vert_\rho=0$, so $v^{*}\in T_\rho^1\P(G)$.

    Suppose there exists $w^{*}\in T_\rho^1\P(G)$ such that $\Vert v-w^{*}\Vert_\rho=\Vert v-v^{*}\Vert_\rho$. \\Then $v^{*}_{ij}=w^{*}_{ij}\quad \forall (i,j)\in E, g_{ij}(\rho)\neq 0$, so $v^{*}=w^{*}$.

    Define $f(t)=\Vert v^{*}+tw-v\Vert_\rho^2$.
    We have $f(0)\leq f(t)\quad \forall t\in \R$, since $v^{*}+tw\in T_\rho^1 \P(G)$.
    $f(t)=\Vert v^{*}-v\Vert_{\rho}^2+2(v^{*}-v, w)_\rho t+t^2\Vert w\Vert_\rho^2$, so
    $f'(0)=2(v^{*}-v, w)_\rho=0$, thus \\$(v^*-v, w)_\rho=0$.
\end{proof}
\begin{definition}\textbf{(Tangent space of $\P(G)$)}
    Let $\rho\in\P(G)$. If $v, \bar{v}\in\mathbb{S}^{n\times n}$ are such that $g_{ij}(\rho)v_{ij}=g_{ij}(\rho)(\bar{v}_{ij})$, we say that $v=\bar{v}$ a.e. \\In fact, \[\Vert v-\bar{v}\Vert^2_\rho=\frac{1}{2}\sum_{(i,j)\in E}(v_{ij}-\bar{v}_{ij})^2g_{ij}(\rho)=0\]

Thus we define $[v]=\{\bar{v}\in\mathbb{S}^{n\times n}:v=\bar{v} \text{ a.e.}\}$ and $\mathbb{H}_p=\{[v]:v\in\mathbb{S}^{n\times n}\}$.
Define $\Pi_\rho:\mathbb{S}^{n\times n}\to \mathbb{S}^{n\times n}$ where $\Pi_\rho(w)=\argmin_{v} \{\Vert w-v\Vert_\rho :\div_\rho(w-v)=0\}$. Let $T_\rho^2\P(G)=\Pi_\rho(\mathbb{S}^{n\times n})$. In Lemma \ref{lem:T2} we showed that $\Pi_\rho$ is well-defined and in Theorem \ref{thm:tangent} we show that $T_\rho^1\P(G)=T_\rho^2\P(G)$.

\end{definition}
\begin{theorem}{\label{thm:tangent}}
    $T_\rho^1\P(G)=T_\rho^2\P(G)$ 
\end{theorem}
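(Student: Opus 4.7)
The plan is to prove both inclusions, reading the equality modulo the a.e.\ relation introduced just above the theorem, and observing that both $T_\rho^1\P(G)$ and $T_\rho^2\P(G)$ coincide (mod a.e.) with the finite-dimensional linear subspace $\{\nabla_G\phi:\phi\in\R^n\}\subset\mathbb{S}^{n\times n}$.

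For the inclusion $T_\rho^2\P(G)\subseteq T_\rho^1\P(G)$, I would take $v^{*}=\Pi_\rho(w)$ and read $\Pi_\rho(w)$ as the minimum $\|\cdot\|_\rho$-norm element having the same $\rho$-divergence as $w$. Applying Theorem \ref{thm:2.5} with $f=-\div_\rho(w)$ — legitimate since $f$ sums to zero by the antisymmetry already exploited in Section \ref{section:1} — yields $\phi\in\R^n$ with $\div_\rho(\nabla_G\phi)=\div_\rho(w)$ together with the sharp inequality $\|\nabla_G\phi\|_\rho\leq\|u\|_\rho$ for every $u$ satisfying $\div_\rho(u)=\div_\rho(w)$. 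The uniqueness clause at the end of the proof of Theorem \ref{thm:2.5} then forces $v^{*}=\nabla_G\phi$ a.e., and the constant sequence $\phi_l=\phi$ exhibits $\nabla_G\phi\in T_\rho^1\P(G)$.

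For the reverse inclusion $T_\rho^1\P(G)\subseteq T_\rho^2\P(G)$, take $v\in T_\rho^1\P(G)$ with $\|v-\nabla_G\phi_l\|_\rho\to 0$. The image $\nabla_G(\R^n)$, when passed to the quotient $\mathbb{H}_\rho$, is a linear subspace of the finite-dimensional Hilbert space $(\mathbb{H}_\rho,\|\cdot\|_\rho)$, hence closed. Consequently $[v]=[\nabla_G\phi]$ for some $\phi\in\R^n$, i.e., $v=\nabla_G\phi$ a.e. Setting $w=v$ and invoking Theorem \ref{thm:2.5} once more, the minimum-norm element with divergence $\div_\rho(v)$ is $\nabla_G\phi=v$ itself, so $\Pi_\rho(v)=v$ a.e., giving $v\in T_\rho^2\P(G)$.

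The main delicacy is bookkeeping the a.e.\ equivalence: $\|\cdot\|_\rho$ is only a seminorm on $\mathbb{S}^{n\times n}$, blind to the edges where $g_{ij}(\rho)=0$, so both containments are only literal up to modification on those edges and must be read inside $\mathbb{H}_\rho$. A secondary technical point is that Theorem \ref{thm:2.5} assumes $\gamma_{\text{Poincar\'e}}(\rho)>0$, which fails on the boundary of $\P(G)$; one handles this by restricting the argument to the subgraph $E_\rho=\{(i,j)\in E:g_{ij}(\rho)>0\}$, on whose connected components $\|\cdot\|_\rho$ is a genuine norm and the Poincar\'e-type inequality goes through unchanged.
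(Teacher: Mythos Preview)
Your argument is correct but follows a different route from the paper. The paper does not invoke Theorem~\ref{thm:2.5} at all; instead it identifies $\Pi_\rho$ with the orthogonal projection $\proj_{T^1_\rho\P(G)}$ constructed in Lemma~\ref{lem:T2}. The key observation there is that, by the integration-by-parts formula, the orthogonality condition $(w-v^*,\nabla_G\phi)_\rho=0$ for all $\phi\in\R^n$ is exactly the condition $\div_\rho(w-v^*)=0$ appearing in the definition of $\Pi_\rho$. Once $\Pi_\rho=\proj_{T^1_\rho\P(G)}$ is established, both inclusions are immediate: the image of a projection onto $T^1_\rho\P(G)$ is $T^1_\rho\P(G)$, and any $v\in T^1_\rho\P(G)$ satisfies $\Pi_\rho(v)=v$.

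Your approach, by contrast, shows concretely that both spaces coincide (mod a.e.) with $\{\nabla_G\phi:\phi\in\R^n\}$, via the graph Poisson solvability of Theorem~\ref{thm:2.5} and the finite-dimensionality of the gradient image. This is more explicit about the structure of the tangent space, but it comes at a cost: Theorem~\ref{thm:2.5} requires $\gamma_{\text{Poincar\'e}}(\rho)>0$, forcing the subgraph workaround you sketch at the end. The paper's route through Lemma~\ref{lem:T2} is uniform in $\rho\in\P(G)$ and sidesteps that boundary issue entirely, which is its main advantage.
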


\begin{proof}
    Let $w\in\mathbb{S}^{n\times n}$. Then 
    
    \begin{align*}
        &\Pi_\rho(w)=\proj_{T^1_\rho\P(G)}w\\
        &\Longleftrightarrow\Pi_\rho(w)\in T_1^\rho\P(G) \text{ and } (w-\Pi_\rho(w),v)_\rho=0\quad \forall v\in T_\rho^1\P(G)\\
        &\Longleftrightarrow \Pi_\rho(w)\in T_1^\rho\P(G) \text{ and } (w-\Pi_\rho(w), \nabla_G\phi)_\rho=0\quad\forall \phi\in\R^n
    \end{align*}

    By definition the image of $\Pi_\rho$ is contained in $T^1_\rho\P(G)$, so $T_\rho^2\P(G)\subset T_\rho^1\P(G)$.

    If $v\in T_\rho^1\P(G)$, then $v=\Pi_\rho(v)$ and so, $v\in T_\rho^2\P(G)$.
\end{proof}
From now on we will denote $T_\rho^1\P(G)=T_\rho^2\P(G)$ as $T_\rho\P(G)$.
\begin{remark}
    From Theorem \ref{thm:2.5}, if $f\in R^n$ and $\sum f_i=0$, then for all $\rho$ such that $\gamma_{\text{Poincar\'e}}(\rho)>0$, there exists $\phi\in R^n$ such that $f=-\div_\rho(\nabla_G\phi)$.

    If $\rho\in\P(G)$, then there exists $v\in T_\rho\P(G)$ such that $f=-\div_\rho(v)$. 
\end{remark}

\noindent
\begin{definition}\textbf{(Fr\'echet derivative)}

Let $\rho\in\P(G)$ and $\F:\P(G)\to\R$. We say that $\F$ has a Fr\'echet derivative at $\rho$ if there exists $f\in\R^n$ such that $\sum f_i=0$ and for every $\bar{\rho}\in\P(G)$,
\begin{equation}
    \label{frechet}\lim_{t\to 0}\frac{\F((1-t)\rho+t\bar{\rho})-\F(\rho)}{t}=(f,\bar{\rho}-\rho)
\end{equation}

Note that $(1-t)\rho+t\bar{\rho}=\rho+t(\bar{\rho}-\rho)$. We denote $f=\frac{\delta \F}{\delta\rho}(\rho)$ and call it the Fr\'echet derivative at $\rho$.
\end{definition}
\begin{lemma} (Lemma 3.13 in \cite{graph})
    When Fr\'echet derivative exists, it is unique.
\end{lemma}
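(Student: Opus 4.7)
The plan is a standard uniqueness argument for directional derivatives on the simplex, exploiting the fact that the normalization $\sum f_i = 0$ is exactly what is needed to pin down the otherwise one-dimensional ambiguity in the pairing $(f, \bar\rho - \rho)$. First I would suppose that $f, \tilde{f} \in \R^n$ both satisfy the conditions of \eqref{frechet} at the point $\rho$. Subtracting the two limit identities and setting $h := f - \tilde{f}$ yields
\[
(h, \bar\rho - \rho) = 0 \qquad \forall\, \bar\rho \in \P(G),
\]
together with $\sum_i h_i = 0$.

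Next I would test this identity against the extreme points of the simplex $\P(G)$, namely the standard basis vectors $e_k \in \R^n$, each of which lies in $\P(G)$. This produces
\[
h_k - (h, \rho) = 0 \qquad \forall\, k \in \{1, \dots, n\},
\]
so every coordinate of $h$ equals the same scalar $(h, \rho)$; in other words, $h$ is a constant vector. Combining this with the zero-sum constraint $\sum_i h_i = 0$ forces $h = 0$, and therefore $f = \tilde{f}$.

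I do not anticipate any real obstacle. The argument only requires that the differences $\{e_k - \rho : k = 1, \dots, n\}$ span the hyperplane $\{v \in \R^n : \sum v_i = 0\}$ (which is immediate, since $e_j - e_k = (e_j - \rho) - (e_k - \rho)$), together with the zero-sum normalization to eliminate the constant-direction ambiguity. The only minor point to verify is that $e_k$ is an admissible test point in \eqref{frechet}, which holds because $e_k \in \P(G)$.
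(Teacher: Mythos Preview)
Your argument is correct and is the standard proof of this fact. Note, however, that the paper does not actually supply its own proof of this lemma: it simply states the result and attributes it to Lemma~3.13 of \cite{graph}. So there is nothing in the paper to compare against beyond the citation. Your write-up would serve perfectly well as a self-contained proof were one to be inserted here.
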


\begin{remark}
    Let $v\in T_\rho\P(G)$ such that $f=-\div_\rho(v)$. If (\ref{frechet}) holds,
    \begin{equation}
        \frac{d}{dt}\F(\rho_t)|_{t=0}=(f,\bar{\rho}-\rho)-(-\div_\rho(v),\bar{\rho}-\rho)=(v,\nabla_G(\bar\rho-\rho))_\rho=(v-\bar{v})_\rho
    \end{equation}
    where $\bar{v}\in T_\rho\P(G)$ and $\rho_t=\rho+t(\bar{\rho}-\rho)$.
\end{remark}

\noindent
\begin{definition}\textbf{(Wasserstein gradient)}

We say that $\F$ s differentiable in the Wasserstein sense at $\rho$ if there exists $v\in T_\rho\P(G)$ and $c>0$ such that the following holds: for any $\epsilon>0$, there exists $\gamma>0$ such that if $\bar{\rho}\in\P(G), \bar{v}\in T_\rho\P(G)$,
\begin{equation}
    \Vert\bar\rho-\rho\Vert_{l_1}<\delta \Longrightarrow |\F(\bar\rho)-\F(\rho)-(v,\bar{v})_\rho|\leq \epsilon\W(\rho,\bar\rho)+c\Vert\bar\rho-\rho+\div_\rho(\bar{v})\Vert_{l_{1}}
\end{equation}
\end{definition}
\begin{theorem}
    Assume $\rho\in\P_0(G)$. There is at most one $v\in T_\rho\P(G)$ satisfying the property of Wasserstein differential. We set $v=\nabla_\W\F(\rho)$ and call $v$ the Wasserstein gradient of $\F$ at $\rho$.
\end{theorem}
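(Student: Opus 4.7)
I would argue by contradiction. Suppose two distinct $v_1, v_2 \in T_\rho\P(G)$ both satisfy the Wasserstein differentiability condition; since $\rho \in \P_0(G)$ forces $g_{ij}(\rho) > 0$ on every edge of $G$, distinctness here is equivalent to $\|v_1 - v_2\|_\rho > 0$. The strategy is to test the defining inequality against a cleverly chosen perturbation $(\bar\rho_t, \bar v_t)$ that kills the residual term $\|\bar\rho - \rho + \div_\rho(\bar v)\|_{l_1}$ identically, leaving only the $\epsilon\,\W(\rho, \bar\rho_t)$ part on the right.

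Concretely, for $t > 0$ small I would set $\bar v_t := t(v_1 - v_2)$, which lies in $T_\rho\P(G)$ by linearity, and $\bar\rho_t := \rho - t\,\div_\rho(v_1 - v_2)$. Integration by parts with $\phi = \mathbbm{1}$ gives $\sum_i\langle\div_\rho(v_1 - v_2)\rangle_i = 0$, so $\bar\rho_t$ has unit mass; because $\rho \in \P_0(G)$, $\bar\rho_t \in \P_0(G)$ for all sufficiently small $t$, and $\|\bar\rho_t - \rho\|_{l_1} \to 0$ as $t \to 0$. By construction $\bar\rho_t - \rho + \div_\rho(\bar v_t) = 0$, so writing the defining inequality for $v_1$ and for $v_2$ at $(\bar\rho_t, \bar v_t)$ and subtracting yields, for every $\epsilon > 0$ and all sufficiently small $t$,
\[
t\|v_1 - v_2\|_\rho^2 \;=\; (v_1 - v_2, \bar v_t)_\rho \;\le\; 2\epsilon\,\W(\rho, \bar\rho_t).
\]

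The crux is then to prove a linear-in-$t$ bound $\W(\rho, \bar\rho_t) \le Ct$ with $C$ independent of $\epsilon$. I would take the straight-line interpolation $\sigma(s) = \rho + s(\bar\rho_t - \rho)$, $s \in [0,1]$; for $t$ small this stays in $\P_0(G)$, so by continuity of $\rho \mapsto \gamma_{\text{Poincar\'e}}(\rho)$ one has $\gamma_{\text{Poincar\'e}}(\sigma(s)) \ge \gamma_0 > 0$ uniformly. Applying Theorem \ref{thm:2.5} pointwise with $f = \dot\sigma(s) = -t\,\div_\rho(v_1 - v_2)$ produces a velocity $\nabla_G\phi(s)$ for $\sigma$. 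Using $F(\tilde\phi) \le F(0) = 0$ from the proof of Theorem \ref{thm:2.5}, together with Cauchy--Schwarz and the Poincar\'e inequality, sharpens the bound there to the genuinely Lipschitz estimate $\|\nabla_G\phi(s)\|_{\sigma(s)} \le 2\|\dot\sigma(s)\|_{l_2}/\sqrt{\gamma_0} = O(t)$; integrating in $s$ gives $\W^2(\rho, \bar\rho_t) \le \int_0^1 \|\nabla_G\phi(s)\|_{\sigma(s)}^2 ds = O(t^2)$. Substituting back yields $\|v_1 - v_2\|_\rho^2 \le 2\epsilon C$ for every $\epsilon > 0$, forcing $\|v_1 - v_2\|_\rho = 0$ and hence $v_1 = v_2$, the desired contradiction. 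The main obstacle is exactly this $O(t)$ Wasserstein estimate: it is where the hypothesis $\rho \in \P_0(G)$ is essential, both to keep the interpolation $\sigma$ inside $\P_0(G)$ and to upgrade Theorem \ref{thm:2.5} into a Lipschitz bound on the minimal velocity via uniform positivity of $\gamma_{\text{Poincar\'e}}$ along the path.
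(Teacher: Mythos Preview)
Your argument is correct and shares the paper's overall skeleton: pick a test pair $(\bar\rho,\bar v)$ with $\bar\rho-\rho+\div_\rho(\bar v)=0$ so the residual term drops out, deduce $|(v_1-v_2,\bar v)_\rho|\le 2\epsilon\,\W(\rho,\bar\rho)$, then show $\W(\rho,\bar\rho)$ scales linearly with the size of the perturbation. The two implementations differ in both steps, though. For the test vector the paper works edge-by-edge, taking $\bar v$ supported on a single edge $(i,j)$ with amplitude $\alpha$ (which is a gradient because $G$ is assumed simply connected, i.e.\ a tree); you instead take $\bar v_t=t(v_1-v_2)$, which has the advantage of lying in $T_\rho\P(G)$ automatically and of producing $\|v_1-v_2\|_\rho^2$ directly on the left. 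For the Wasserstein bound the paper builds an explicit velocity along the straight line by rescaling, $v^*_{ji}=g_{ij}(\rho)\bar v_{ji}/g_{ij}(\sigma)$, and computes its energy by hand; you invoke Theorem~\ref{thm:2.5} plus a uniform lower bound on $\gamma_{\text{Poincar\'e}}$ along the segment to get $\|\nabla_G\phi(s)\|_{\sigma(s)}\le 2\|\dot\sigma\|_{l_2}/\sqrt{\gamma_0}=O(t)$. Your route is a bit cleaner and reuses the Poincar\'e machinery already set up; note that you could shorten it further by citing Lemma~\ref{lem:c} directly for the estimate $\W(\rho,\bar\rho_t)\le C\|\bar\rho_t-\rho\|_{l_1}=O(t)$ instead of re-deriving it from Theorem~\ref{thm:2.5}.
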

\begin{proof}
    Fix $\epsilon>0$. Let $\delta>0$ be such that $v, \tilde{v}\in T_\rho\P(G)$ satisfies the condition for $\F$ to be Wasserstein differentiable. Also fix $(i,j)\in E$.

    Define \begin{equation*}\bar{v}_{ji}=\begin{cases}
        \cfrac{\sqrt{\omega_{ij}}\alpha}{g_{ij}(\rho)} & g_{ij}(\rho)\neq 0\\
        0 & g_{ij}(\rho)= 0
    \end{cases}\end{equation*} where $\alpha\in\R$.

    Define $\sigma(t)=\rho-t\div_\rho(\bar{v})$. Then $\sigma(t)=\rho-\div_\rho(\bar{v})$, and set this to $\bar\rho$. Then $\dot\sigma+\div_\rho(\bar{v})=0$.

    For $\alpha<<1$, $\Vert\bar\rho-\rho\Vert_{l_1}<\delta$.

    Thus \begin{align*}
        |\F(\bar\rho)-\F(\rho)-(v,\bar{v})_\rho|, |\F(\bar\rho)-\F(\rho)-(\tilde{v},\bar{v})_\rho|&\leq \epsilon\W(\rho,\bar\rho)+c\Vert\bar\rho-\rho+\div_\rho(\bar{v})\Vert_{l_1}\\
    &=\epsilon\W(\rho,\bar\rho)
    \end{align*}
    Hence \begin{equation}\label{eq:2.12}
        |(v-\tilde{v},\bar{v})_\rho|\leq 2\epsilon \W(\rho, \bar{\rho})
    \end{equation}
    
    Now define $v_{ji}^*=\cfrac{g_{ij}(\rho)}{g_{ij}(\sigma)}\bar{v}_{ji}$.
    Then
    \begin{align*}
        \langle\div_\sigma(v^*)\rangle_i=\bigg\langle\sum_{j\in N(i)}\sqrt{\omega_{ij}}g_{ij}(\sigma)v^*_{ji}\bigg\rangle_i=\bigg\langle\sum_{j\in N(i)}\sqrt{\omega_{ij}}g_{ij}(\rho)\bar{v}_{ji}\bigg\rangle_i=\langle\div_\rho(\bar{v})\rangle_i
    \end{align*}
    $\dot\sigma+\div_\sigma(v^*)=0$, so 
    \begin{align*}
        \W^2(\rho, \bar\rho)\leq \int_0^1\Vert v^{*}(t)\Vert_{\sigma(t)}^2dt = \int_0^1\sum_{(i,j)\in E}\frac{1}{2}\frac{\omega_{ij}\alpha^2}{g_{ij}(\sigma)}dt
    \end{align*}
    Further \begin{align*}
        |(v-\tilde v, \bar{v})_\rho|=\bigg|\frac{1}{2}\sum_{(i,j)\in E}\sqrt{\omega_{ij}}\alpha(v-\tilde{v})_{ij}\bigg|
    \end{align*}
    From \eqref{eq:2.12} \[\sum\sqrt{\omega_{ij}}\alpha|v-\tilde{v}|_{ij}\leq 2\sqrt{\omega_{ij}}\alpha\epsilon\sqrt{\int_0^1\cfrac{1}{g_{ij}(\sigma)}dt}\]. Since $\epsilon>0$ is arbitrary, $(v-\tilde{v})_{ij}=0$, so $v=\tilde{v}$.
\end{proof}
\begin{lemma}(Lemma 3.5 in \cite{graph})\label{lem:c}
    For every $\epsilon_0>0$, if $\rho,\bar\rho\in\P_{\epsilon_0}(G)$, there exists $c>0$ such that $\sqrt{\epsilon_0}\W(\rho,\bar\rho)\leq c\Vert\rho-\bar{\rho}\Vert_{l_1}$
\end{lemma}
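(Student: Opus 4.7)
\textbf{Plan for the proof of Lemma \ref{lem:c}.} The strategy is to construct an explicit admissible pair $(\sigma,v)$ joining $\rho$ and $\bar\rho$ and bound its action in terms of $\|\rho-\bar\rho\|_{l_1}$. The natural choice is the linear interpolation
\[
\sigma(t)=(1-t)\rho+t\bar\rho,\qquad t\in[0,1],
\]
which stays inside $\P_{\epsilon_0}(G)$ because $\P_{\epsilon_0}(G)$ is convex. Since $\dot\sigma(t)=\bar\rho-\rho$ has zero coordinate-sum and $\gamma_{\text{Poincar\'e}}(\sigma(t))>0$, Theorem \ref{thm:2.5} supplies for each $t$ a function $\phi(t)\in\R^n$ (unique up to an additive constant, which we fix by requiring $\sum_i\phi_i(t)=0$) such that
\[
\rho-\bar\rho=\operatorname{div}_{\sigma(t)}(\nabla_G\phi(t)),
\]
and we let $v(t):=\nabla_G\phi(t)$, which is a velocity for $\sigma$.

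Next I would estimate $\|v(t)\|_{\sigma(t)}$ pointwise in $t$. By the integration-by-parts formula,
\[
\|\nabla_G\phi(t)\|_{\sigma(t)}^{2}=-(\phi(t),\operatorname{div}_{\sigma(t)}(\nabla_G\phi(t)))=(\phi(t),\bar\rho-\rho)_{l_2}\le \|\phi(t)\|_{l_\infty}\,\|\bar\rho-\rho\|_{l_1}\le \|\phi(t)\|_{l_2}\,\|\bar\rho-\rho\|_{l_1}.
\]
Since $\sum_i\phi_i(t)=0$ and $\sum_i\phi_i(t)^2=\|\phi(t)\|_{l_2}^2$, the Poincar\'e inequality of Theorem \ref{thm:Poincare}(iv) (applied to the normalized vector $\phi(t)/\|\phi(t)\|_{l_2}$) gives
\[
\gamma_{\text{Poincar\'e}}(\sigma(t))\,\|\phi(t)\|_{l_2}^{2}\le \|\nabla_G\phi(t)\|_{\sigma(t)}^{2}.
\]
Combining the last two inequalities yields
\[
\|v(t)\|_{\sigma(t)}^{2}\le \frac{\|\bar\rho-\rho\|_{l_1}^{2}}{\gamma_{\text{Poincar\'e}}(\sigma(t))}.
\]

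The remaining step is to lower-bound $\gamma_{\text{Poincar\'e}}(\sigma(t))$ uniformly in $t$ by a multiple of $\epsilon_0$. Because $\sigma(t)\in\P_{\epsilon_0}(G)$, property (5) of the metric tensor gives $g_{ij}(\sigma(t))\ge\min\{\sigma_i(t),\sigma_j(t)\}\ge\epsilon_0$ for every edge $(i,j)\in E$. Plugging this bound into the defining infimum of $\gamma_{\text{Poincar\'e}}$ produces
\[
\gamma_{\text{Poincar\'e}}(\sigma(t))\ge \epsilon_0\,\gamma_{0},\qquad \gamma_{0}:=\inf_{\beta}\Bigl\{\tfrac12\textstyle\sum_{(i,j)\in E}\omega_{ij}(\beta_i-\beta_j)^{2}:\ \sum\beta_i=0,\ \sum\beta_i^{2}=1\Bigr\},
\]
and $\gamma_0>0$ since $G$ is connected (this is exactly the argument of the lemma showing $\gamma_{\text{Poincar\'e}}(\rho)>0$, applied with $g\equiv 1$). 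Putting everything together,
\[
\W^{2}(\rho,\bar\rho)\le \int_{0}^{1}\|v(t)\|_{\sigma(t)}^{2}\,dt\le \frac{\|\bar\rho-\rho\|_{l_1}^{2}}{\epsilon_0\,\gamma_{0}},
\]
which rearranges into $\sqrt{\epsilon_0}\,\W(\rho,\bar\rho)\le c\,\|\rho-\bar\rho\|_{l_1}$ with $c:=1/\sqrt{\gamma_0}$, a constant depending only on the graph.

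The main obstacle is the uniform Poincar\'e lower bound: one has to be sure that replacing $g_{ij}(\sigma)$ by the constant $\epsilon_0$ still leaves a strictly positive infimum, which rests on the connectedness of $G$ (exactly the argument already used in Section \ref{section:1}). The rest is a combination of the integration-by-parts identity and H\"older's inequality, with measurability and continuity of $t\mapsto\phi(t)$ following from the uniqueness/minimization statement in Theorem \ref{thm:2.5}.
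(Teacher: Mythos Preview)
The paper does not actually supply a proof of this lemma; it is quoted verbatim as ``Lemma 3.5 in \cite{graph}'' and used as a black box in the subsequent theorem. So there is no in-paper argument to compare against.

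Your argument is correct and is in fact the standard one: take the straight-line interpolation $\sigma(t)=(1-t)\rho+t\bar\rho$, use Theorem~\ref{thm:2.5} to produce the gradient velocity, and then combine integration by parts with the Poincar\'e inequality (Theorem~\ref{thm:Poincare}(iv)) and the lower bound $g_{ij}(\sigma(t))\ge\min\{\sigma_i(t),\sigma_j(t)\}>\epsilon_0$ coming from property~(5) of $g$. The chain
\[
\|\nabla_G\phi(t)\|_{\sigma(t)}^{2}=(\phi(t),\bar\rho-\rho)_{l_2}\le\|\phi(t)\|_{l_2}\|\bar\rho-\rho\|_{l_1}\le\frac{\|\nabla_G\phi(t)\|_{\sigma(t)}}{\sqrt{\gamma_{\text{Poincar\'e}}(\sigma(t))}}\,\|\bar\rho-\rho\|_{l_1}
\]
is clean, and factoring $\epsilon_0$ out of $\gamma_{\text{Poincar\'e}}(\sigma(t))$ to get the graph-only constant $\gamma_0>0$ (the spectral gap of the weighted Laplacian) is exactly how the $\sqrt{\epsilon_0}$ appears. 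Two very minor comments: (i) the inequality $\|\phi\|_{l_\infty}\le\|\phi\|_{l_2}$ is fine but not needed---you can go directly from $(\phi,\bar\rho-\rho)_{l_2}\le\|\phi\|_{l_2}\|\bar\rho-\rho\|_{l_2}\le\|\phi\|_{l_2}\|\bar\rho-\rho\|_{l_1}$; (ii) for admissibility of $(\sigma,v)$ in the definition of $\W$, Borel measurability of $t\mapsto v(t)$ is all that is required, and this follows immediately since $\phi(t)$ is the unique zero-mean minimizer of a quadratic whose coefficients depend continuously on $t$.
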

\begin{theorem}
    Suppose $\F$ has a Wasserstein differential at $\rho$. If $\rho\in \P_0(G)$ then $\F$ has a Fr\'echet differential at $\rho$.
\end{theorem}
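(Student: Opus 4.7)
The plan is to identify the Fr\'echet derivative $f$ as the scalar potential whose graph-gradient equals the Wasserstein gradient, then read this off from the Wasserstein-differentiability inequality applied to a carefully chosen probe. Because $\rho\in\P_0(G)$, property (v) of $g$ gives $g_{ij}(\rho)\ge\min\{\rho_i,\rho_j\}>0$ on every edge, so $\Vert\cdot\Vert_\rho$ is a genuine norm on the finite-dimensional space $\mathbb{S}^{n\times n}$ and the linear subspace $\{\nabla_G\phi:\phi\in\R^n\}$ is automatically closed. Therefore $T_\rho\P(G)=\{\nabla_G\phi:\phi\in\R^n\}$, and the Wasserstein gradient $v:=\nabla_\W\F(\rho)$ admits a scalar potential: $v=\nabla_G\phi$ for some $\phi\in\R^n$ which, after the recentering of Theorem \ref{thm:Poincare}, satisfies $\sum_i\phi_i=0$. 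I set $f:=\phi$; this is my candidate Fr\'echet derivative, and $\sum_i f_i=0$ holds by construction.

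Fix $\bar\rho\in\P(G)$ and write $\rho_t:=\rho+t(\bar\rho-\rho)$. For $|t|$ small, $\rho_t$ stays in $\P_{\epsilon_0}(G)$ for some fixed $\epsilon_0>0$, so $\gamma_{\text{Poincar\'e}}(\rho)>0$ and Theorem \ref{thm:2.5} produces $\psi\in\R^n$ with $-\div_\rho(\nabla_G\psi)=\bar\rho-\rho$. The probe $\bar v_t:=t\nabla_G\psi\in T_\rho\P(G)$ is chosen precisely so that $\rho_t-\rho+\div_\rho(\bar v_t)=0$, which annihilates the $l_1$-penalty in the Wasserstein-differentiability inequality. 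Given $\epsilon>0$, that inequality then yields, for $|t|$ small,
\[
\bigl|\F(\rho_t)-\F(\rho)-(v,\bar v_t)_\rho\bigr|\le\epsilon\,\W(\rho,\rho_t).
\]
The integration by parts formula reduces the Wasserstein pairing to a Euclidean one:
\[
(v,\bar v_t)_\rho=t(\nabla_G\phi,\nabla_G\psi)_\rho=-t(\phi,\div_\rho(\nabla_G\psi))=t(\phi,\bar\rho-\rho)=t(f,\bar\rho-\rho).
\]
Dividing by $t$ and bounding $\W(\rho,\rho_t)/t$ via Lemma \ref{lem:c} (which gives $\W(\rho,\rho_t)\le C\Vert\rho_t-\rho\Vert_{l_1}=Ct\Vert\bar\rho-\rho\Vert_{l_1}$) produces
\[
\left|\frac{\F(\rho_t)-\F(\rho)}{t}-(f,\bar\rho-\rho)\right|\le\epsilon\,C\Vert\bar\rho-\rho\Vert_{l_1}.
\]
As $\epsilon>0$ was arbitrary, the limit equals $(f,\bar\rho-\rho)$, which is exactly Fr\'echet differentiability.

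The main pitfall is misidentifying $f$. The preceding Remark might suggest $f=-\div_\rho(v)$, but that identity refers to an \emph{auxiliary} vector representing $f$ via divergence, not to the Wasserstein gradient itself. The actual relation is $v=\nabla_G f$, i.e., the Wasserstein gradient is the graph-gradient of the Euclidean derivative, and $f$ is its scalar potential. Seeing this requires the particular probe $\bar v_t=t\nabla_G\psi$ where $\psi$ solves the elliptic equation $-\div_\rho(\nabla_G\psi)=\bar\rho-\rho$, whose solvability (Theorem \ref{thm:2.5}) rests on the strict positivity $\gamma_{\text{Poincar\'e}}(\rho)>0$ and is the crucial ingredient. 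The remaining technical items---stability of $\rho_t$ in $\P_{\epsilon_0}(G)$ for small $|t|$ and the Wasserstein-to-$l_1$ comparison of Lemma \ref{lem:c}---are immediate from the hypothesis $\rho\in\P_0(G)$.
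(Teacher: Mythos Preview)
Your proof is correct and follows essentially the same route as the paper: write the Wasserstein gradient as $v=\nabla_G\phi$ (using $\rho\in\P_0(G)$), set $f$ to be the recentered potential, choose the probe $\bar v_t=t\bar v$ with $-\div_\rho(\bar v)=\bar\rho-\rho$ so the $l_1$ penalty vanishes, bound $\W(\rho,\rho_t)$ by $\Vert\rho_t-\rho\Vert_{l_1}$ via Lemma~\ref{lem:c}, and finish by integration by parts to convert $(v,\bar v)_\rho$ into $(f,\bar\rho-\rho)$. The only cosmetic difference is that you explicitly invoke Theorem~\ref{thm:2.5} to obtain $\bar v=\nabla_G\psi$, whereas the paper simply takes $\bar v\in T_\rho\P(G)$; since $T_\rho\P(G)$ consists of gradients when $\rho\in\P_0(G)$, this is the same choice.
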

\begin{proof}
    Since $\rho\in\P_0(G)$ and $v=\nabla_\W\F(\rho)\in T_\rho\P(G)$, there exists $\phi\in\R^n$ such that 
    \begin{equation}
        v=\nabla_G\phi
    \end{equation}
    Let $f_i=\phi_i-\cfrac{1}{n}\sum_{j=1}^n\phi_j$. Then $\sum f_i=0$ and $v=\nabla_G f$, Since $\F$ has a Wasserstein differential at $\rho$, there exists $c>0$ such that for every $\epsilon>0$, there exists $\delta>0$ such that if $\rho^{*}\in\P(G)$ and $v^{*}\in T_\rho\P(G)$ then
    \begin{equation}\label{eq:wass}
        \Vert \rho^*-\rho\Vert_{l_1}<\delta\Longrightarrow |\F(\rho^*)-\F(\rho)-(v,v^*)_\rho|\leq\epsilon\W(\rho^*, \rho)+c\Vert\rho^*-\rho+\div_\rho(v^*)\Vert_{l_1}
    \end{equation}
    Let $\bar\rho\in\P(G)$. Since $\rho\in\P_0(G)$, there exists $\epsilon_0>0$ such that 
    \begin{equation}\label{eq:epsilon0}
        \rho\in\P_{2\epsilon_0}(G)
    \end{equation}
    Let $\rho_t=(1-t)\rho+t\bar\rho$.

    Then $\Vert\rho_t-\rho\Vert_{l_1}+\Vert t(\bar\rho-\rho)\Vert_{l_1}=2t\Vert\bar\rho-\rho\Vert_{l_1}$ so for $0<t<<1$, \eqref{eq:epsilon0} implies that $\rho_t\in\P_{\epsilon_0}(G)$. Note that $\sum(\bar\rho_i-\rho_i)=0$, and so there exists $\bar{v}\in T_\rho\P(G)$ such that 
    \begin{equation}\label{eq:rho}
        \bar\rho-\rho=-\div_\rho(\bar{v})
    \end{equation}
    By \eqref{eq:wass} if $0<t<<1$, $\Vert\rho_t-\rho\Vert_{l_1}<\delta$.

    Using Lemma \ref{lem:c} and \eqref{eq:rho},
    \begin{align*}
        |\F(\rho_t)-\F(\rho)-(v,t\bar{v})_\rho|\leq \epsilon\W(\rho_t, \rho)+c\Vert\rho_t-\rho+\div_\rho(t\bar{v})\Vert_{l_1} \\
        \leq \epsilon\frac{1}{\sqrt{\epsilon_0}}\Vert\rho_t-\rho\Vert_{l_1}+tc\Vert(\bar\rho-\rho)+\div_\rho(\bar{v})\Vert_{l_1}=\epsilon\frac{1}{\sqrt{\epsilon_0}}\Vert\rho_t-\rho\Vert_{l_1}
    \end{align*}
    Therefore 
    \[\bigg|\frac{\F(\rho_t)-\F(\rho)}{t}-(v,\bar{v})_\rho\bigg|\leq\frac{\epsilon}{\sqrt{\epsilon_0}}\Vert\bar\rho-\rho\Vert_{l_1}\]

    Hence 
    \[\limsup_{t\to 0^+}\bigg|\frac{\F(\rho_t)-\F(\rho)}{t}-(v,\bar{v})_\rho\bigg|\leq\frac{\epsilon}{\sqrt{\epsilon_0}}\Vert\bar\rho-\rho\Vert_{l_1}\]

    Using \eqref{eq:rho},
    \[
    \limsup_{t\to 0^+}\bigg|\frac{\F(\rho_t)-\F(\rho)}{t}-(f,\bar\rho-\rho)_\rho\bigg|\leq\frac{\epsilon}{\sqrt{\epsilon_0}}\Vert\bar\rho-\rho\Vert_{l_1}
    \]
    In conclusion,
    \[
    \lim_{t\to 0^+}\frac{\F(\rho_t)-\F(\rho)}{t}-(v,\bar{v})_\rho=0
    \]
\end{proof}

\begin{theorem}\label{thm:2.14}(Lemma 3.14 in \cite{graph})
    Suppose $\F$ is Fr\'echet continuously differentiable at $\rho$. Then $\F$ has a Wasserstein differential and $\nabla_\W\F(\rho)=\nabla_G(\frac{\delta\F}{\delta\rho})(\rho)$.
\end{theorem}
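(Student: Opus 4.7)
The plan is to set $v := \nabla_G f$ with $f := \tfrac{\delta\F}{\delta\rho}(\rho) \in \R^n$ (so $\sum_i f_i = 0$) and verify that this $v$ satisfies the Wasserstein differentiability condition at $\rho$ with constant $c := \Vert f\Vert_{l_\infty}$. Since $v$ is literally a graph gradient, $v \in T_\rho\P(G)$. What remains is to establish the quantitative estimate
\[
|\F(\bar\rho) - \F(\rho) - (v, \bar v)_\rho| \leq \epsilon\,\W(\rho,\bar\rho) + c\Vert\bar\rho - \rho + \div_\rho(\bar v)\Vert_{l_1}
\]
for $\bar\rho$ sufficiently close to $\rho$ in $l_1$, where $\epsilon$ is the chosen tolerance.

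I will obtain this from a Taylor-type expansion along the straight-line interpolation $\rho_t := (1-t)\rho + t\bar\rho$ for $t \in [0,1]$, which lies in $\P(G)$ by convexity. First I verify the chain rule $\tfrac{d}{dt}\F(\rho_t) = \bigl(\tfrac{\delta\F}{\delta\rho}(\rho_t),\, \bar\rho - \rho\bigr)$; this follows from the Fréchet definition applied at $\rho_t$ after rewriting the perturbation $\rho_t + h(\bar\rho - \rho) = (1 - \tfrac{h}{1-t})\rho_t + \tfrac{h}{1-t}\bar\rho$ as a convex combination and using $\bar\rho - \rho_t = (1-t)(\bar\rho - \rho)$. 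Integrating in $t$ over $[0,1]$ and invoking continuity of $\rho \mapsto \tfrac{\delta\F}{\delta\rho}(\rho)$ pulled back to the compact segment $\{\rho_t\}_{t\in[0,1]}$ then yields, for every $\epsilon' > 0$, a threshold $\delta > 0$ such that $\Vert\bar\rho - \rho\Vert_{l_1} < \delta$ forces
\[
|\F(\bar\rho) - \F(\rho) - (f,\, \bar\rho - \rho)| \leq \epsilon'\Vert\bar\rho - \rho\Vert_{l_1}.
\]

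Next I use the algebraic splitting $\bar\rho - \rho = -\div_\rho(\bar v) + (\bar\rho - \rho + \div_\rho(\bar v))$ together with the integration-by-parts lemma from the Preliminaries, which gives $(f,\, -\div_\rho(\bar v)) = (\nabla_G f,\, \bar v)_\rho = (v, \bar v)_\rho$, to arrive at
\[
|\F(\bar\rho) - \F(\rho) - (v,\bar v)_\rho| \leq \epsilon'\Vert\bar\rho-\rho\Vert_{l_1} + \Vert f\Vert_{l_\infty}\Vert\bar\rho-\rho+\div_\rho(\bar v)\Vert_{l_1}.
\]
Finally, to pass from $\Vert \bar\rho - \rho\Vert_{l_1}$ to $\W(\rho,\bar\rho)$, I apply Lemma~\ref{lem:2.7}: integrating its $l_\infty$ bound on $\dot\sigma$ along a near-optimal path from $\rho$ to $\bar\rho$ and using Cauchy--Schwarz produces a constant $C$ depending only on $n$ and $C_\omega$ with $\Vert\bar\rho - \rho\Vert_{l_1} \leq C\,\W(\rho,\bar\rho)$. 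Taking $\epsilon' = \epsilon/C$ closes the estimate with $c = \Vert f\Vert_{l_\infty}$, identifying $\nabla_\W\F(\rho) = \nabla_G f$.

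The principal obstacle I anticipate is pairing the chain-rule step with the required uniform continuity of $t \mapsto \tfrac{\delta\F}{\delta\rho}(\rho_t)$: I need the $l_\infty$-oscillation of the Fréchet derivative along the segment to shrink as $\bar\rho \to \rho$, which uses joint continuity of $\rho \mapsto \tfrac{\delta\F}{\delta\rho}(\rho)$ together with compactness of $[0,1]$ and boundedness of the path in $l_1$. Once that is in place every remaining step is an integration-by-parts identity, a triangle inequality, or the $l_1$-to-Wasserstein conversion from Lemma~\ref{lem:2.7}.
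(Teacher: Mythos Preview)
The paper does not supply its own proof of this theorem; the statement is given with a citation to \cite{graph} (Lemma~3.14 there), and Section~3 begins immediately afterward. So there is no in-paper argument to compare your proposal against.

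That said, your argument is correct and self-contained within the framework of Section~\ref{section:1}. The ingredients you invoke --- the integration-by-parts identity $(f,-\div_\rho(\bar v))=(\nabla_G f,\bar v)_\rho$ from the opening lemma of that section, the mean-value estimate along the linear interpolant $\rho_t$ using continuity of $\rho\mapsto\tfrac{\delta\F}{\delta\rho}(\rho)$, and the $l_1$-to-Wasserstein comparison drawn from Lemma~\ref{lem:2.7} --- combine exactly as you describe, and the choice $c=\|f\|_{l_\infty}$ is independent of $\epsilon$ as required by the definition. The only place worth a remark is your reading of Lemma~\ref{lem:2.7}: its statement is phrased for a distinguished path, but its proof actually yields the pointwise bound $\|\dot\sigma(t)\|_{l_\infty}\leq \sqrt{2}\,nC_\omega\|v(t)\|_{\sigma(t)}$ for \emph{any} admissible pair $(\sigma,v)$, which is what you integrate in $t$, bound by Cauchy--Schwarz, and then minimize over paths to obtain $\|\bar\rho-\rho\|_{l_1}\leq C\,\W(\rho,\bar\rho)$. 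With that clarification your proof goes through and could stand in place of the citation.
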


\section{Solutions to HJE for a particular $g$}

In this section, we assume that 
\begin{equation}
    g(s,t)=\begin{cases}
        \frac{s-t}{\log s-\log t} & s\neq t, st\neq 0 \\
        0 & st=0 \\
        t & s=t, st\neq 0
    \end{cases}
\end{equation}

Further, define 
    \begin{equation}
        A_{ij}=\begin{cases}
    \omega_{ij} & j\in N(i) \\
    0 & j\neq N(i), j\neq i \\
    -\sum_{k\in N(i)}\omega_{ik} & j=i
    \end{cases}
    \end{equation}

Note that $A$ is symmetric.

\begin{lemma}\label{lem:3.1}
    For any $t\geq 0$, $e^{tA}$ is a transition probability matrix.
\end{lemma}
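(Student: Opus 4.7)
The plan is to verify the two defining properties of a transition probability matrix separately: rows sum to one, and all entries are non-negative. Both follow from structural properties of $A$, namely that $A$ annihilates the constant vector and that its off-diagonal entries are non-negative.

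First I would observe that $A\mathbbm{1}=0$. Indeed, for each $i$,
\[
(A\mathbbm{1})_i \;=\; \sum_{j=1}^n A_{ij} \;=\; \sum_{j\in N(i)}\omega_{ij} \;+\; A_{ii} \;=\; \sum_{j\in N(i)}\omega_{ij} \;-\; \sum_{k\in N(i)}\omega_{ik} \;=\; 0.
\]
Consequently $A^k\mathbbm{1}=0$ for all $k\ge 1$, and so, from the power series definition,
\[
e^{tA}\mathbbm{1}\;=\;\sum_{k=0}^{\infty}\frac{t^k A^k\mathbbm{1}}{k!}\;=\;\mathbbm{1}.
\]
This shows that each row of $e^{tA}$ sums to $1$.

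Next I would establish non-negativity of the entries of $e^{tA}$ by the standard shift-and-exponentiate trick. Set
\[
c \;=\; \max_{1\le i\le n}\sum_{k\in N(i)}\omega_{ik},
\]
so that $c\ge -A_{ii}$ for every $i$. Then the matrix $B:=cI+A$ has non-negative entries: the off-diagonal entries $B_{ij}=A_{ij}=\omega_{ij}\ge 0$ for $j\in N(i)$ (and $0$ otherwise), and the diagonal entries $B_{ii}=c+A_{ii}\ge 0$. Since $cI$ and $A$ commute,
\[
e^{tA}\;=\;e^{-tc}\,e^{tB}\;=\;e^{-tc}\sum_{k=0}^{\infty}\frac{t^k B^k}{k!}.
\]
Each power $B^k$ has non-negative entries (as a product of non-negative matrices), so the whole series is an entrywise non-negative matrix, and multiplying by the positive scalar $e^{-tc}$ preserves this. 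Hence every entry of $e^{tA}$ is non-negative.

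Combining the two bullets, $e^{tA}$ has non-negative entries and each of its rows sums to $1$, which is the definition of a transition probability matrix. There is no real obstacle here beyond being careful with the sign conventions; the only place where one must pause is to check that $cI+A$ really is entrywise non-negative, which is why I chose $c$ to dominate every $|A_{ii}|$. The symmetry of $A$ is not needed for this argument, but it guarantees that $e^{tA}$ is simultaneously row- and column-stochastic, which will be useful later in the paper.
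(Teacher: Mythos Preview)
Your proof is correct. Both you and the paper verify the same two properties (row sums equal $1$ and entrywise non-negativity), but the non-negativity arguments differ in execution: the paper uses the Euler-type limit $e^{tA}=\lim_{l\to\infty}(I_n+tA/l)^l$ and observes that $I_n+tA/l$ is entrywise non-negative once $l$ is large (since the diagonal entries become $1-(t/l)\sum_{k\in N(i)}\omega_{ik}\geq 0$), whence so is each power and the limit; you instead use the shift trick $e^{tA}=e^{-tc}e^{t(cI+A)}$ with $c$ chosen so that $cI+A$ is entrywise non-negative, and then expand $e^{t(cI+A)}$ as a power series in a non-negative matrix. Both are standard ways of showing that the exponential of a Metzler matrix is non-negative; your version avoids a limit passage at the cost of introducing an auxiliary constant, while the paper's version derives row-stochasticity and non-negativity from a single approximating object $(I_n+tA/l)^l$. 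The row-sum argument is essentially the same in spirit: you compute $A\mathbbm{1}=0$ and use the power series, whereas the paper computes the row sums of $I_n+tA/l$ directly and carries them through the limit.
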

\begin{proof}
    \begin{equation*}
        \sum_{j=1}^n\bigg(I_n+\frac{tA}{l}\bigg)_{i_0j}=1+\frac{tA_{i_0i_0}}{l}+\sum_{j\neq i_0}\frac{tA_{i_0j}}{l}=1-\frac{t}{l}\sum_{k\in N(i_0)}\omega_{i_0k}+\frac{t}{l}\sum_{j\in N(i_0)}\omega_{i_0j}=1.
    \end{equation*}
    Thus we also have \[\sum_{j=1}^n\bigg(I_n+\frac{tA}{l}\bigg)_{ij_0}=1\] so $\bigg(I_n+\cfrac{tA}{l}\bigg)$ is a transition probability matrix.

    Let $e=(1, ..., 1)$. Then $e^T\bigg(I_n+\cfrac{tA}{l}\bigg)^l=e^T, \bigg(I_n+\cfrac{tA}{l}\bigg)e=e$.
    By sending $l\to\infty$, we have $e^Te^{tA}=e^T, e^{tA}e=e^{tA}$. Further $\bigg(I_n+\cfrac{tA}{l}\bigg)^l\geq 0$ when $l>>1$, so $(e^{tA})_{ij}\geq 0$ for all $(i,j)\in \{1, ..., n\}\times \{1, ..., n\}$. 
\end{proof}

\begin{lemma}\label{lem:3.2}
    Assume $P$ is a transition probability matrix. If $\mu\in\P(G), P\mu\in\P(G)$. 
\end{lemma}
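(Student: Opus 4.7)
The plan is to verify directly the two defining properties of $\P(G)$: componentwise nonnegativity and unit total mass. Both follow immediately from the two facts about transition probability matrices that Lemma \ref{lem:3.1} has just recorded, namely that the entries of $P$ are nonnegative and that $e^{T}P = e^{T}$ (so the columns of $P$ sum to $1$).

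First I would check nonnegativity. For each $i$,
\[
(P\mu)_i \;=\; \sum_{j=1}^{n} P_{ij}\,\mu_j,
\]
which is a sum of products of nonnegative reals and hence nonnegative.

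Next I would check conservation of mass by interchanging the order of summation:
\[
\sum_{i=1}^{n}(P\mu)_i \;=\; \sum_{i=1}^{n}\sum_{j=1}^{n} P_{ij}\mu_j \;=\; \sum_{j=1}^{n}\mu_j \sum_{i=1}^{n} P_{ij} \;=\; \sum_{j=1}^{n}\mu_j \;=\; 1,
\]
where the penultimate equality uses precisely the column-sum identity $e^{T}P = e^{T}$ from Lemma \ref{lem:3.1}, and the last equality uses $\mu \in \P(G)$. Together with nonnegativity this gives $P\mu \in \P(G)$.

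There is no real obstacle here; the lemma is essentially a bookkeeping statement which serves as a preparatory observation for the solution construction to follow. The only point that requires a moment of care is the orientation convention: the argument needs the columns (not the rows) of $P$ to sum to $1$, which is exactly what Lemma \ref{lem:3.1} supplies through the relation $e^{T}e^{tA} = e^{T}$ obtained from the symmetry of $A$.
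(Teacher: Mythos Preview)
Your proof is correct and essentially identical to the paper's: both verify nonnegativity via $(P\mu)_i=\sum_j P_{ij}\mu_j\ge 0$ and unit mass by swapping the order of summation and using $\sum_i P_{ij}=1$. The only difference is that you add the explanatory remark about needing column sums (via $e^T P = e^T$ from Lemma~\ref{lem:3.1}), which the paper leaves implicit.
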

\begin{proof}
    $(P\mu)_i=\sum P_{ij}\mu_j\geq 0$\\
    $\sum_{i=1}^n(P\mu_i)=\sum_{i,j=1}^{n}P_{ij}\mu_j=\sum_{j=1}^n\mu_j\sum_{i=1}^nP_{ij}=\sum_{j=1}^n\mu_j=1$
\end{proof}
\begin{lemma}
    $\div_\mu(\nabla_G\log_\mu)=A\mu\quad \forall \mu\in\P_0(G)$.
\end{lemma}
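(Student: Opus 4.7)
The plan is a direct computation coordinate by coordinate, exploiting the logarithmic-mean structure of $g$. Since $\mu \in \P_0(G)$, every $\mu_i > 0$, so $\log \mu_i$ is well-defined and no zero-denominator issues arise in the formula for $g$. First I would unpack the definitions: for each vertex $i$,
\[
\langle \div_\mu(\nabla_G \log \mu)\rangle_i = \sum_{j\in N(i)} \sqrt{\omega_{ij}}\, g(\mu_i,\mu_j)\,(\nabla_G \log \mu)_{ji},
\]
and $(\nabla_G \log \mu)_{ji} = \sqrt{\omega_{ij}}(\log \mu_j - \log \mu_i)$ by symmetry of $\omega$. This reduces the claim to showing
\[
\sum_{j\in N(i)} \omega_{ij}\, g(\mu_i,\mu_j)(\log \mu_j - \log \mu_i) = (A\mu)_i.
\]

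The key algebraic observation is then the identity $g(\mu_i,\mu_j)(\log\mu_j - \log\mu_i) = \mu_j - \mu_i$. When $\mu_i \neq \mu_j$, this is immediate from $g(s,t) = (s-t)/(\log s - \log t)$: the $(\log s - \log t)$ factors cancel and a sign flip gives $\mu_j - \mu_i$. When $\mu_i = \mu_j$, both sides vanish (the log difference is $0$, and $\mu_j - \mu_i = 0$), so the identity holds trivially. This is precisely the feature of this particular $g$ (the logarithmic mean) that makes the computation collapse, and it is really the only substantive step.

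Substituting back,
\[
\langle \div_\mu(\nabla_G \log \mu)\rangle_i = \sum_{j\in N(i)} \omega_{ij}(\mu_j - \mu_i) = \sum_{j\in N(i)} \omega_{ij}\mu_j \;-\; \mu_i \sum_{j\in N(i)} \omega_{ij}.
\]
Using the definition of $A$, the first sum is $\sum_{j\in N(i)} A_{ij}\mu_j$ and the second is $-A_{ii}\mu_i$, so together they give $\sum_{j=1}^n A_{ij}\mu_j = (A\mu)_i$, which is exactly the claim.

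There is no real obstacle here; the only thing to be careful about is the degenerate case $\mu_i = \mu_j$, which is handled by the separate branch in the definition of $g$, and the fact that $\mu \in \P_0(G)$ is used precisely to ensure $\log\mu_i$ makes sense and we never fall into the $st = 0$ branch of $g$.
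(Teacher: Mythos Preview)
Your proof is correct and follows essentially the same direct computation as the paper: expand the divergence, use the logarithmic-mean identity $g(\mu_i,\mu_j)(\log\mu_j-\log\mu_i)=\mu_j-\mu_i$, and match with the definition of $A$. If anything, you are slightly more careful than the paper in explicitly treating the degenerate case $\mu_i=\mu_j$.
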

\begin{proof}
    \begin{align*}
        \langle \div_\mu(\nabla_G\log\mu)\rangle_i&=\sum_{j\in N(i)}\sqrt{\omega_{ij}}g_{ij}(\mu)(\nabla_G\log\mu)_{ji} \\
        &=\sum_{j\in N(i), \mu_i\mu_j\neq 0}\omega_{ij}\frac{\mu_i-\mu_j}{\log\mu_i\log\mu/j}(\log\mu_j-\log\mu_i) \\
        &=\sum_{j\in N(i)}\omega_{ij}(\mu_j-\mu_i)
    \end{align*}
    \begin{align*}
        (A\mu)_i=\sum_{j=1}^nA_{ij}\mu_j=\sum_{j\in N(i)}A_{ij}\mu_j+A_{ii}\mu_i=\sum_{j\in N(i)}\omega_{ij}(\mu_j-\mu_i)
    \end{align*}
\end{proof}

\begin{remark}\label{rem: 3.4}
    Set $B(t)=e^{tA}$. Then $\dot{B}(t)=Ae^{tA}$.
\end{remark}

\begin{theorem}
    Let $u_0:\P(G)\to\R$ be Fr\'echet continuously differentiable.
    \begin{itemize}
        \item [(i)] For every $\mu\in\P(G)$, 
    \begin{equation}
        \begin{cases}
        \dot\sigma^\mu=\div_{\sigma^\mu}(\nabla_G\log\sigma^\mu)=A\sigma^\mu \\
        \sigma^\mu(0)=\mu
        \end{cases}     
    \end{equation}
    admits a unique solution $\sigma^\mu:[0,T]\to\P(G)$.\\
    \item [(ii)] $u(t,\mu)=u_0(\sigma^\mu)$ satisfies \eqref{HJE}.
    \end{itemize}
\end{theorem}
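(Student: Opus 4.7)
The plan is to solve (i) by direct matrix exponential and to verify (ii) by a chain-rule computation combined with Theorem \ref{thm:2.14}.

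For (i), I would set $\sigma^\mu(t):=B(t)\mu=e^{tA}\mu$, which is the unique solution of $\dot\sigma=A\sigma$ with $\sigma(0)=\mu$ by standard ODE theory. The invariance $\sigma^\mu(t)\in\P(G)$ for all $t\geq 0$ follows immediately by combining Lemma \ref{lem:3.1} (each $e^{tA}$ is a transition probability matrix) with Lemma \ref{lem:3.2} (transition matrices preserve $\P(G)$). The remaining identity $A\sigma^\mu(t)=\div_{\sigma^\mu(t)}(\nabla_G\log\sigma^\mu(t))$ is precisely the divergence-logarithm lemma proved just above (valid on $\P_0(G)$ and extended via the degeneracy convention $g(s,t)=0$ when $st=0$ if any components vanish).

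For (ii), set $f_t:=\frac{\delta u_0}{\delta\rho}(\sigma^\mu(t))$ and $\psi_t:=B(t)f_t$. The key structural fact is that $A$ is symmetric, so $B(t)=e^{tA}$ is symmetric and commutes with $A$. Differentiating $u(t,\mu)=u_0(\sigma^\mu(t))$ in $t$ via Fréchet differentiability of $u_0$,
\begin{equation*}
\partial_t u(t,\mu)=(f_t,\dot\sigma^\mu(t))=(f_t,AB(t)\mu)=(B(t)f_t,A\mu)=(\psi_t,\div_\mu(\nabla_G\log\mu)),
\end{equation*}
where the last step invokes the divergence-logarithm identity applied at the base point $\mu$. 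The integration-by-parts formula of Section \ref{section:1} then gives
\begin{equation*}
\partial_t u(t,\mu)=-(\nabla_G\log\mu,\nabla_G\psi_t)_\mu.
\end{equation*}

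For the spatial side, the chain rule applied to $u(t,\cdot)=u_0\circ B(t)$ yields, for any $\bar\mu\in\P(G)$,
\begin{equation*}
\lim_{s\to 0}\frac{u(t,\mu+s(\bar\mu-\mu))-u(t,\mu)}{s}=(f_t,B(t)(\bar\mu-\mu))=(\psi_t,\bar\mu-\mu),
\end{equation*}
and since $\sum_i(\psi_t)_i=e^TB(t)f_t=e^Tf_t=0$, one reads off $\frac{\delta u}{\delta\mu}(t,\mu)=\psi_t$. Because $u_0$ is Fréchet continuously differentiable and $B(t)$ is a smooth linear map, $u(t,\cdot)$ inherits this regularity, so Theorem \ref{thm:2.14} applies to give $\nabla_\W u(t,\mu)=\nabla_G\psi_t$. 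Integration by parts then produces
\begin{equation*}
\Delta_{ind}u(t,\mu)=(\div_\mu(\nabla_G\psi_t),\log\mu)=-(\nabla_G\log\mu,\nabla_G\psi_t)_\mu,
\end{equation*}
which matches $\partial_t u(t,\mu)$. The initial condition $u(0,\mu)=u_0(\mu)$ follows from $\sigma^\mu(0)=\mu$.

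The crux of the argument is recognizing that the symmetry of $A$ together with the identity $A\mu=\div_\mu(\nabla_G\log\mu)$ is exactly what makes the time derivative coincide with the individual noise operator; beyond this, no calculation is nonroutine. The only real subtlety is boundary behavior: $\log\mu$ requires $\mu\in\P_0(G)$, so (ii) is genuinely posed on the interior, and one must note that $\sigma^\mu$ may reach $\partial\P(G)$ only if $\mu$ already lies there.
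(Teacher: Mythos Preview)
Your argument is correct and structurally mirrors the paper's: part (i) is handled identically via $\sigma^\mu(t)=e^{tA}\mu$ and Lemmas \ref{lem:3.1}--\ref{lem:3.2}, and for part (ii) both you and the paper compute $\frac{\delta u}{\delta\mu}(t,\mu)=e^{tA}\frac{\delta u_0}{\delta\mu}(e^{tA}\mu)$ by the chain rule and symmetry of $e^{tA}$, then invoke Theorem \ref{thm:2.14} and integration by parts to identify $\Delta_{\text{ind}}u$. The one place your route diverges is the time derivative: you obtain $\partial_t u=(f_t,\dot\sigma^\mu(t))$ by a direct Fr\'echet chain rule along the curve $t\mapsto\sigma^\mu(t)$, whereas the paper instead appeals to the $\epsilon$--$\delta$ definition of the Wasserstein differential of $u_0$ at $e^{t_0A}\mu$ along the curve with velocity $-\nabla_G\log(e^{t_0A}\mu)$, arriving at $\partial_t u=(\frac{\delta u_0}{\delta\mu}(e^{tA}\mu),\div_{e^{tA}\mu}\nabla_G\log(e^{tA}\mu))$ and matching this with $\Delta_{\text{ind}}u$ via $Ae^{tA}=e^{tA}A$. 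Your version is cleaner but tacitly uses that continuous Fr\'echet differentiability on $\P(G)$ yields the chain rule along nonlinear $C^1$ curves in the simplex (the paper's definition is stated only along segments); the paper's route avoids this by working through the Wasserstein differential, which is formulated for arbitrary curves with velocities.
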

\begin{proof}
    Let $\sigma^\mu(t)=e^{tA}\mu$. From Lemma \ref{lem:3.1}
 and Lemma \ref{lem:3.2}, $\sigma^\mu: [0,T]\to\P(G)$. Using Remark \ref{rem: 3.4}, $\dot\sigma^\mu(t)=Ae^{tA}\mu=A\sigma^\mu$. The solution is unique since $\sigma^\mu\mapsto A\sigma^\mu$ is $\Vert A\Vert$-lipschitz.

Let $f\in\R^n$ such that $f=\bar{\mu}-\mu$ for some $\bar{\mu}\in\P(G)$. Note that $\sum f_i=0$, so $\sum (e^{tA}f)_i=0$.
\begin{align*}
    &\lim_{\epsilon\to 0}\frac{u(t, \mu+\epsilon f)-u(t,\mu)}{\epsilon}\\
    &=\lim_{\epsilon\to 0}\frac{u_0(e^{tA}\mu+\epsilon e^{tA}f)-u_0(e^{tA}\mu)}{\epsilon}\\
    &=\bigg(\frac{\delta u_0}{\delta\mu}(e^{tA}\mu), e^{tA}f\bigg)\\
    &=\bigg(e^{tA}\frac{\delta u_0}{\delta\mu}(e^{tA}\mu), f\bigg)
\end{align*}
From Theorem \ref{thm:2.14} $\nabla_\W u(t,\mu)=\nabla_G(\frac{\delta u}{\delta\mu}(t,\mu))=\nabla_G(e^{tA}\frac{\delta u_0}{\delta\mu}(e^{tA}\mu))$.
\begin{align*}
    \Delta_{\text{ind}}u(t,\mu)&=-(\nabla_\W u(t,\mu),\nabla_G\log_\mu)_\mu \\
    &=-\bigg(\nabla_G\bigg(e^{tA}\frac{\delta u_0}{\delta\mu}(e^{tA}\mu)\bigg),\nabla_G\log\mu\bigg)_\mu \\
    &=\bigg(\frac{\delta u_0}{\delta\mu}(e^{tA}\mu), e^{tA}\div_\mu(\nabla_G\log\mu)\bigg)
\end{align*}

If $|e^{tA}\mu-e^{t_0A}\mu|$ is sufficiently small, 

\begin{align*}
    &\bigg|\frac{u_0(e^{tA}\mu)-u_0(e^{t_0A}\mu)}{t-t_{0}}+(\nabla_\W u_0(e^{t_0A}\mu)-\nabla_G\log (e^{t_0A}\mu))_{e^{t_0A}\mu}\bigg| \\
    &\leq \frac{\epsilon\W(e^{tA}\mu, e^{t_0A}\mu)}{|t-t_0|}+c\bigg\Vert \frac{e^{tA}\mu-e^{t_0A}\mu}{t-t_0}-\div_{e^{tA}\mu}(\nabla_G\log (e^{t_0A}\mu))\bigg\Vert \\
    &\leq \epsilon\int_{t_0}^{t}\frac{\Vert\nabla_G(\log (e^{tA}\mu(s)))\Vert_{e^{tA}\mu(s)}^2}{t-t_0}+c\bigg\Vert \frac{e^{tA}\mu-e^{t_0A}\mu}{t-t_0}-\div_{e^{tA}\mu}(\nabla_G\log (e^{t_0A}\mu))\bigg\Vert \\
    &\underset{t\to t_0}{\longrightarrow} \epsilon\Vert\nabla_G(\log (e^{t_0A}\mu))\Vert_{e^{t_0A}\mu}^2
\end{align*}
Since $\epsilon>0$ is arbitrary, $\cfrac{d}{dt}u_0(e^{tA}\mu)|_{t=t_0}=-(\nabla_\W u_0(e^{t_0A}\mu),\nabla_G\log (e^{t_0A}\mu))_{e^{tA}\mu}$.

Thus 
\begin{align*}
    \frac{\partial u}{\partial t}(t,\mu)&=\frac{\partial}{\partial t}(u_0(e^{tA}\mu)) \\
    &=-(\nabla_\W u_0(e^{tA}\mu),\nabla_G\log (e^{tA}\mu))_{e^{tA}\mu} \\
    &=\bigg(\frac{\delta u_0}{\delta\mu}(e^{tA}\mu), \div_{e^{tA}\mu}\nabla_G\log(e^{tA}\mu)\bigg)
\end{align*}

Therefore we conclude that $\cfrac{\delta u}{\delta t}(t,\mu)=\Delta_{\text{ind}} u(t,\mu)$
\end{proof}

\section{Solutions to HJE}
In this section we are going to assume that 
\begin{itemize}
    \item $g(s,t)=0$ if $st=0$. \\
    \item Set $\bar{g}(s,t)=(\log s-\log t)g(s,t)$ if $s,t>0$. $\bar{g}$ admits a unique extension on $[0,+\infty)^2$ which is uniquely determined on $[0, 1]^2$. Hence $\mu\to \div_\mu(\nabla_G\log\mu)$ admits an extension $\xi$ on $[0, + \infty)^n$ which is unique on $\P(G)$. \\
    \item $\xi$ is Lipschitz of class $C^1$.
\end{itemize}

\begin{theorem}\label{thm:4.2}
    If $\mu\in\P(G)$, then 
    \begin{equation}
        \begin{cases}
        \dot\sigma=\xi(\sigma)=\div_{\sigma}(\nabla_G\log\sigma)\\
        \sigma(0)=\mu
        \end{cases}
    \end{equation}
    admits a unique solution $\sigma:[0,T]\to\P(G)$ for any $T>0$.
\end{theorem}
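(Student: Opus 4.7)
The plan is to obtain existence and uniqueness in $\R^n$ via the Picard--Lindel\"of theorem, then verify that the resulting flow keeps $\P(G)$ invariant. Since $\xi$ is assumed Lipschitz and $C^1$ on $[0,\infty)^n$, extend it to a globally Lipschitz vector field on $\R^n$ (for instance by post-composing with the coordinate-wise projection onto $[0,\infty)^n$). Picard--Lindel\"of then yields a unique $\sigma\in C^1([0,T],\R^n)$ with $\sigma(0)=\mu$ and $\dot\sigma=\xi(\sigma)$ for every $T>0$. What remains is to show that $\sigma(t)\in\P(G)$ for all $t\in[0,T]$.

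I would establish invariance of $\P(G)$ in two steps corresponding to the constraints $\sum_i\sigma_i=1$ and $\sigma_i\geq 0$. For the mass constraint, apply the integration by parts formula of Lemma 2.1 with the constant test vector $\phi=(1,\dots,1)$: since $\nabla_G\phi=0$, we obtain $\sum_i\langle\div_\rho(v)\rangle_i=0$ for every $\rho$ and $v$, so $\frac{d}{dt}\sum_i\sigma_i(t)=\sum_i\langle\xi(\sigma(t))\rangle_i=0$ and $\sum_i\sigma_i(t)\equiv 1$. For the non-negativity constraint, verify the sub-tangential (Nagumo) condition at each boundary point of $[0,\infty)^n$. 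If $\sigma_{i_0}=0$ while $\sigma_j\geq 0$ for all $j$, then
\[
\langle\xi(\sigma)\rangle_{i_0}=\sum_{j\in N(i_0)}\omega_{i_0 j}\,\bar{g}(\sigma_j,0)\geq 0,
\]
because $\bar{g}(s,t)=(\log s-\log t)g(s,t)\geq 0$ whenever $s\geq t>0$, and this sign is preserved by the continuous extension to $t=0$ (which is assumed to exist in this section). Nagumo's invariance theorem then gives that $[0,\infty)^n$ is forward-invariant, and combining with the hyperplane invariance concludes $\sigma(t)\in\P(G)$.

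The main obstacle is the sub-tangential step: one must argue carefully that the extended $\bar{g}(\cdot,0)$ is genuinely non-negative on $[0,\infty)$, which relies on the standing assumption that $\bar{g}$ admits a continuous extension to $[0,1]^2$ together with the monotonicity inherited from $\log$. If invoking Nagumo is undesirable, an elementary substitute is to study $\tau(t)=\min_i\sigma_i(t)$: at any candidate first crossing $\tau(t_*)=0$, the minimizing coordinate $i_0$ satisfies $\dot\sigma_{i_0}(t_*)\geq 0$ by the computation above, and a short Dini-derivative argument (alternatively a perturbation $\mu\mapsto \mu+\epsilon\mathbf{1}/n$ with $\epsilon\downarrow 0$) rules out an immediate negative excursion. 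Uniqueness of $\sigma$ as a curve in $\P(G)$ is then automatic from uniqueness in $\R^n$, and the result holds for every $T>0$ because the Lipschitz constant of $\xi$ is global.
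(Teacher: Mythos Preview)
Your argument is correct and follows a genuinely different path from the paper. The paper argues that coordinates with $\mu_i=0$ remain at zero, reduces to the strictly positive sub-vector, obtains a local solution there via Carath\'eodory's existence theorem, and iterates this reduction each time another coordinate reaches zero, reaching $T$ after finitely many steps; uniqueness is then read off from the Lipschitz hypothesis on $\xi$. You instead secure global existence and uniqueness in $\R^n$ in one stroke from Picard--Lindel\"of and then prove that $\P(G)$ is forward-invariant by combining mass conservation with the Nagumo sub-tangential condition $\xi_{i_0}(\sigma)\geq 0$ at faces $\sigma_{i_0}=0$. Your route is shorter and arguably more robust: it only needs $\bar g(s,0)\geq 0$, which you obtain from the sign of $(\log s-\log t)g(s,t)$ for $s\geq t>0$ and continuity of the assumed extension, whereas the paper's reduction step tacitly uses the stronger statement $\bar g(s,0)=0$ (so that zero coordinates are absorbing), an equality that is not implied by the standing hypotheses of Section~4. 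The Dini-derivative and $\epsilon$-perturbation alternatives you outline are perfectly adequate self-contained replacements for Nagumo's theorem if you prefer to avoid citing it.
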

\begin{proof}
    If $\mu_i=0$ then $(\div_\mu(\nabla_G\log\mu))_i=0$ and so $\dot\sigma_i=(\div_\sigma(\nabla_G\log\sigma))_i$ is satisfied by $\sigma_i(t)=\mu_i \quad\forall t\in [0, T]$. 

    Let $I$ be the set of $i$ such that $\mu_i=0$ and $j\in I^c$. Then
    \begin{equation*}
        (\div_\mu(\nabla_G\log\sigma))_j=\sum_{k\in N(j)}\omega_{jk}\bar{g}(\sigma,\sigma_j)=\sum_{k\in N(j)\setminus I}\omega_{jk}\bar{g}(\sigma_k,\sigma_j)
    \end{equation*} 
    Thus we can reduce the problem from $i\in \{1, ..., n\}$ to $i\in I^c$.
    
    For simplicity, assume $I^c=\{1, 2, ..., k\}$ and $I=\{k+1, ..., n\}$.
    
    Define $\tilde\sigma=\begin{pmatrix}
    \sigma_1 \\
    \vdots \\
    \sigma_k
    \end{pmatrix}$ and $\tilde\mu=\begin{pmatrix}
    \mu_1 \\
    \vdots \\
    \mu_k
    \end{pmatrix}$.
    So we want to solve
    \begin{equation}\label{reduced_cont}
        \begin{cases}
            \dot{\tilde\sigma}=\xi(\tilde\sigma) \\
            \tilde\sigma(0)=\tilde\mu
        \end{cases}
    \end{equation}
    By Carath\'eodory's existence theorem there exists $\delta>0$ such that \eqref{reduced_cont} admits a solution $\tilde\sigma:[0,\delta)\to \R^k$. Since $\tilde\sigma(0)\in (0,1)^k$ and $\tilde\sigma$ is continuous, there exists $\bar\delta\leq\delta$ such that $\tilde\sigma:[0,\tilde\sigma]\to[0,1]^k$. Further there exists a largest $T_1$ such that $\tilde\sigma([0,T_1])\subset [0,1]^k$.Then $\sum_{j=1}^k\tilde\sigma_j(0)=1$ and $\sum_{j=1}^l\dot{\tilde\sigma}_j=\sum_{j=1}^k\sum_{i\in N(j)}\omega_{ij}\bar{g}(\mu_j,\mu_i)=0$, and so $\sum_{j=1}^k\tilde\sigma_j=1$ for all $t\in [0, T_1]$. 

    Assume $\tilde\sigma_1(T_1), ..., \tilde\sigma_m(T_1)>0, \tilde\sigma_{m+1}(T_1)=\cdots=\tilde\sigma_k(T_1)=0$.

    If $m=k$ then $T_1=T$.

    If $T_1<T$, repeat the procedure with $(\sigma_1, ..., \sigma_m)$ on $[T_1, T_2]$.

    If $T_2=T$ we are done.

    Since there are only finitely many steps, eventually we achieve a solution on $[0,T]$.

    Since $\xi$ is Lipschitz, the solution is unique.
\end{proof}

We will now denote the solution from \ref{thm:4.2} as $\sigma^\mu$ and define $u(t, \mu)=u_0(\sigma^\mu(t))$.
\begin{theorem}\label{thm:sigma_C1}
    Suppose $F\in C^1(\R^n, \R^n)\cap\text{Lip}(\R^n,\R^n)$. Let $\sigma:[0,T]\times\R^n\to\R^n$ be the unique solution to 
    \begin{equation}\label{eq:continuity}
        \begin{cases}
            \dot\sigma(t,\mu)=F(\sigma(t,\mu)) \\
            \sigma(0,\mu)=\mu
        \end{cases}
    \end{equation}
    Then $\sigma$ is of class $C^1$ on $(0,T)\times \R^n$. 
\end{theorem}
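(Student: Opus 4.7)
My plan is to prove this by the classical method of smooth dependence on initial data: establish continuity of $\sigma$ in $(t,\mu)$, identify a candidate spatial derivative via the variational (linearized) equation, verify that this candidate is indeed the Fréchet derivative using Grönwall, and observe that the time derivative is continuous from the ODE itself.

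First I would establish Lipschitz dependence in $\mu$. Writing the integral form $\sigma(t,\mu) - \sigma(t,\bar\mu) = (\mu - \bar\mu) + \int_0^t \bigl(F(\sigma(s,\mu)) - F(\sigma(s,\bar\mu))\bigr)\,ds$ and applying the Lipschitz bound on $F$ with constant $L$, Grönwall's inequality yields $|\sigma(t,\mu) - \sigma(t,\bar\mu)| \le e^{Lt} |\mu - \bar\mu|$. Combined with the bound $|\sigma(t,\mu) - \sigma(s,\mu)| \le L(1+|\mu|)|t-s|$ coming from $|F(\sigma)| \le |F(0)| + L|\sigma|$, this gives joint continuity of $\sigma$ on $[0,T]\times\R^n$.

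Next I would introduce the candidate spatial derivative $\Phi(t,\mu) \in \R^{n\times n}$ as the unique solution of the variational equation
\begin{equation*}
    \dot{\Phi}(t,\mu) = DF(\sigma(t,\mu))\,\Phi(t,\mu), \qquad \Phi(0,\mu) = I_n.
\end{equation*}
Since $F \in C^1$, the coefficient $t \mapsto DF(\sigma(t,\mu))$ is continuous and uniformly bounded by $L$, so this linear ODE admits a unique solution on $[0,T]$ with $\|\Phi(t,\mu)\| \le e^{Lt}$. Joint continuity of $\Phi$ in $(t,\mu)$ follows from continuous dependence of linear ODEs on their coefficients: if $\mu_k \to \mu$, then $DF(\sigma(\cdot,\mu_k)) \to DF(\sigma(\cdot,\mu))$ uniformly on $[0,T]$ by the previous step together with uniform continuity of $DF$ on compact sets, and a Grönwall argument on $\Phi(\cdot,\mu_k) - \Phi(\cdot,\mu)$ finishes the claim.

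The crucial step is verifying $\Phi(t,\mu) = \partial_\mu \sigma(t,\mu)$. Fix $\mu$ and $h \in \R^n$, and set $r(t) = \sigma(t,\mu+h) - \sigma(t,\mu) - \Phi(t,\mu)h$. Differentiating,
\begin{equation*}
    \dot r(t) = \bigl[F(\sigma(t,\mu+h)) - F(\sigma(t,\mu)) - DF(\sigma(t,\mu))(\sigma(t,\mu+h) - \sigma(t,\mu))\bigr] + DF(\sigma(t,\mu))\, r(t),
\end{equation*}
with $r(0)=0$. Call the bracketed term $\eta(t,h)$. Since $F \in C^1$ and $|\sigma(t,\mu+h) - \sigma(t,\mu)| \le e^{LT}|h|$ uniformly in $t\in[0,T]$, Taylor's theorem gives $|\eta(t,h)| = o(|h|)$ uniformly in $t$. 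Grönwall then produces $|r(t)| \le e^{LT}\int_0^T |\eta(s,h)|\,ds = o(|h|)$, establishing Fréchet differentiability with $\partial_\mu \sigma = \Phi$. Continuity of $\partial_\mu\sigma$ is already known from the previous paragraph. Finally $\partial_t \sigma(t,\mu) = F(\sigma(t,\mu))$ is continuous as a composition of continuous maps, so both partial derivatives exist and are continuous on $(0,T)\times\R^n$, proving $\sigma \in C^1$.

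The main obstacle is step three: making the Taylor remainder $\eta(t,h)$ small uniformly in $t\in[0,T]$, because the Grönwall closure requires the $o(|h|)$ bound to be independent of $t$. The uniform Lipschitz estimate from the first step is precisely what confines $\sigma(t,\mu+h)$ to a compact neighborhood of $\sigma(t,\mu)$ for all $t\in[0,T]$ simultaneously, which together with uniform continuity of $DF$ on that neighborhood delivers the required uniformity and closes the argument.
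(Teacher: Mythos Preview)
Your proof is correct and follows the same classical strategy as the paper---smooth dependence on initial data via the linearized (variational) equation---but the mechanism you use to identify the spatial derivative is different. The paper works coordinatewise: for each basis direction $e$ it forms the difference quotient $\psi_\epsilon(t)=\epsilon^{-1}(\sigma(t,\mu+\epsilon e)-\sigma(t,\mu))$, obtains equiboundedness and equicontinuity from the integral representation $F(b)-F(a)=\bigl(\int_0^1\nabla F(a+s(b-a))\,ds\bigr)(b-a)$, extracts a uniformly convergent subsequence by Ascoli--Arzel\`a, and then invokes uniqueness for the limiting linear ODE $\dot\psi=\nabla F(\sigma)\psi$, $\psi(0)=e$, to conclude that the full family converges. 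Continuity of the derivative in $\mu$ is handled by a second Ascoli--Arzel\`a pass. You instead posit the full Jacobian $\Phi$ as the solution of $\dot\Phi=DF(\sigma)\Phi$, $\Phi(0)=I_n$, and control the Fr\'echet remainder $r(t)=\sigma(t,\mu+h)-\sigma(t,\mu)-\Phi(t,\mu)h$ directly by Gr\"onwall, using uniform continuity of $DF$ on the compact trajectory to make the Taylor error $o(|h|)$ uniformly in $t$. Your route avoids compactness extraction and delivers Fr\'echet differentiability in one stroke rather than direction by direction; the paper's route is perhaps more elementary in that it never needs to name the uniform-in-$t$ Taylor estimate explicitly, trading it for a softer compactness-plus-uniqueness argument. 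Both are standard and equally rigorous.
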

\begin{proof}
    If we can show that $\sigma$ is continuous then $F\circ \sigma$ will be continuous and by \eqref{eq:continuity}, $\sigma(\cdot,\mu)$ would be continuously differentiable. It remains to show that $\sigma(t,\cdot)$ is continuously differentiable. 
    Let $\xi\in\R^n$. 
    \begin{align*}
        \sigma(t, \mu+\epsilon)-\sigma(t,\mu)|&=|\xi+\int_0^t F(\sigma(s,\mu+\epsilon))-F(\sigma(s,\mu))ds| \\
        &\leq |\xi|+\int_0^t\text{lip}F|\sigma(s,\mu+\epsilon)-\sigma(s,\mu)|ds \\
        &\leq |\xi|e^{t\text{lip}F}
    \end{align*}
    The last inequality is true from Gronwall's inequality.

    Thus $\sigma(t, \cdot)$ is Lipschitz and $\text{Lip}\sigma(t, \cdot)\leq e^{t\text{lip}F}$ and so $t\mapsto\sigma(t,\mu)$ is of class $C^1$.

    Let $e=(0, ..., 1, 0, ..., 0)$ where $1$ is on the $j-$th term.

    \[\sigma_i(t, \mu+\epsilon e)=\mu_i+\epsilon e_i+\int_0^t F_i(\sigma(s, \mu+\epsilon e_i))ds\]
    Set $\psi_\epsilon(t)=\cfrac{\sigma(t, \mu+\epsilon e)-\sigma(t, \mu)}{\epsilon}$.
    \begin{equation}\label{eq:0}
        \cdot\psi_{\epsilon}(t)=\frac{F(\sigma(t, \mu+\epsilon e))-F(\sigma(t,\mu))}{\epsilon}
    \end{equation}
    Note that
    \begin{equation}
        F(b)-F(a)=\int_0^1\frac{d}{dt}F(a+t(b-a))dt=\bigg(\int_0^1\nabla F(a+t(b-1))dt\bigg)(b-a)=:G(a,b)(b-a)
    \end{equation}
    Then $G$ is continuous, and $G(a,a)=\nabla F(a)$.
    By \eqref{eq:0} 
    \begin{equation}\label{eq:3}
        \dot\psi_\epsilon=G(\sigma(t,\mu),\sigma(t,\mu+\epsilon e))(\sigma(t,\mu+\epsilon e)-\sigma(t,\mu)).
    \end{equation}
    Since $\sigma$ is continuous, it is bounded on the compact set $[0,1]\times B_1(\mu)$.

    Use \eqref{eq:3} to conclude 
    \begin{equation}\label{eq:5}
        \sup_{\substack{t\in [0,1]\\ \epsilon\in [-1,1]}}|\dot\psi_\epsilon(t)|<+\infty
    \end{equation}

    Also \begin{align}\label{eq:6}
        |\psi_\epsilon(t)|=\bigg|\frac{\sigma(t,\mu+\epsilon e)-\sigma(t,\mu)}{\epsilon}\bigg|\leq \frac{|\epsilon e|e^{t\text{lip}F}}{\epsilon}\leq e^{t\text{lip}F}
    \end{align}
    From \eqref{eq:5} and \eqref{eq:6} we can apply Ascoli-Arzella theorem to conclude that for all $(\epsilon_k)_k\to 0$, there exists a subsequence $(k_l)_l$ such that $(\psi_{\epsilon_{k_l}})_l\to \psi$ for some $\psi\in C([0,T], \R^n)$.

    Using \eqref{eq:3}, 
    \begin{align*}
        \psi(t)=\lim_{l\to 0}\psi_{\epsilon_{k_l}}(t)&=\lim_{l\to\infty} e+\int_0^t G(\sigma(t,\mu),\sigma(t,\mu+\epsilon_{k_l} e))(\sigma(t,\mu+\epsilon_{k_l}e-\sigma(t,\mu))dt \\
        &=e+\int_0^t\nabla F(\sigma(t,\mu))\psi(t)dt
    \end{align*}

    Thus $\psi$ is of class $C^1$, and 
    \[
    \begin{cases}
        \dot\psi = \nabla F(\sigma(t,\mu))\psi(t) \\
        \psi(0)=e
    \end{cases}
    \]
    admits a unique solution, and so $\lim_{l\to\infty}\psi_{\epsilon_{k_l}}$ is independent of the chosen subsequence. Hence we conclude $\lim_{\epsilon\to 0}\psi_\epsilon(t)=\psi(t)$ exists. This shows that $\cfrac{\delta\sigma}{\delta e}(t,\mu)$ exists.

    Define \[H(t,\mu,M)=\nabla F(\sigma(t,\mu))M\quad t\in[0,T], \mu\in\P(G), M\in\R^{n\times n}\] Note that $H$ is continuous.
    We plan to show that $\psi(t, \cdot)$ is continuous. Let $(\mu_n)_n\subset\P(G)$ be a sequence that converges to $\mu$. We want to show that $\psi(t,\mu_n)\to\psi(t, \mu)$.

    Suppose $|\nabla F|<\leq c$ for some $c$. For $\epsilon>0$,
    \[\frac{d}{dt}\sqrt{\epsilon+|\psi|^2}=\frac{1}{2}\frac{2\langle\psi,\dot\psi\rangle}{\sqrt{\epsilon+|\psi|^2}}=\frac{\langle\psi,\nabla F(\sigma)\psi\rangle}{\sqrt{\epsilon+|\psi|^2}}\leq \frac{c|\psi|^2}{\sqrt{\epsilon+|\psi|^2}}\leq c\sqrt{\epsilon+|\psi|^2}\]

    So by Gronwall's inequality, \[\sqrt{\epsilon+|\psi(t,\mu)|^2}\leq\sqrt{\epsilon+|\psi(0,\mu)|^2}e^{ct}\]
    Let $\epsilon\to 0$, and obtain 
    \[|\psi(t,\mu)\leq|\psi(0,\mu)|e^{ct}=e^{ct}\] This shows that $(\psi(t,\mu_n))_n$ is bounded.
    Further \[|\dot\psi(t,\mu_n)|\leq c|\psi(t,\mu_n)|\leq ce^{ct}\] This shows that $(\psi(t,\mu_n))_n$ is equicontinuous.

    From Ascoli-Arzella theorem, $(\psi(t,\mu_n))_n$ has a subsequence $(n_l)_l$ which converges uniformly on $[0,T]$ to some $f$.

    Thus \[f=\lim_{l\to\infty}\psi(t,\mu_{n_l})=\lim_{l\to\infty}e+\int_0^t\nabla F(\sigma(s,\mu_{n_l}))\psi(s,\mu_{n_l})ds=e+\int_0^t\nabla F(\sigma(s,\mu))f(s)ds\]

    Thus we have \[\begin{cases}
        \dot{f}=\nabla F(\sigma(t,\mu))f(t) \\
        f(0)=e
    \end{cases}\]
    Since this has a unique solution $f(t)=\psi(t,\mu)$, $\psi$ is continuous.
   \end{proof}
\begin{lemma}\label{lem:chain-rule}
    Assume $A:\P(G)\to\P(G)$ is continuously Fr\'echet differentiable at $\mu_0$. Assume $v:\P(G)\to\R$ is continuously Fr\'echet differentiable at $\nu_0=A(\mu_0)$. Then $v\circ A$ is differentiable at $\mu_0$ and $\frac{d}{dt}v(A(\mu_0+tf))|_{t=t_0}=\bigg((\nabla_\mu A(\mu_0))^T\frac{\delta v}{\delta\mu}(A(\mu_0), f\bigg)$
\end{lemma}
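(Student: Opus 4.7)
The plan is to prove the chain rule by integrating the two Fr\'echet derivatives along straight segments in $\P(G)$ and then passing to the limit via continuity. Let $f\in\R^n$ with $\sum_i f_i=0$ so that $\mu_0+tf\in\P(G)$ for $|t|$ sufficiently small. I would first use the continuous Fr\'echet differentiability of $A$ and the fundamental theorem of calculus applied componentwise along $\tau\mapsto\mu_0+\tau tf$ to obtain
\begin{equation*}
    A(\mu_0+tf)-A(\mu_0)=t\int_0^1 \nabla_\mu A(\mu_0+\tau tf)\,f\,d\tau,
\end{equation*}
where $\nabla_\mu A$ denotes the matrix whose $i$-th row is $\frac{\delta A_i}{\delta\mu}$, so that $\bigl(\nabla_\mu A\cdot f\bigr)_i=\bigl(\frac{\delta A_i}{\delta\mu},f\bigr)$.

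Second, since the convex segment from $A(\mu_0)$ to $A(\mu_0+tf)$ lies in $\P(G)$ and $v$ is continuously Fr\'echet differentiable on a neighborhood of $\nu_0$, I would similarly write
\begin{equation*}
    v(A(\mu_0+tf))-v(A(\mu_0))=\int_0^1\left(\frac{\delta v}{\delta\mu}\bigl((1-s)A(\mu_0)+sA(\mu_0+tf)\bigr),\,A(\mu_0+tf)-A(\mu_0)\right)ds.
\end{equation*}
Substituting the first display into the second, dividing by $t$, and sending $t\to 0$, continuity of both $\frac{\delta v}{\delta\mu}$ and $\frac{\delta A}{\delta\mu}$ together with dominated convergence (all quantities are bounded on the compact set $\P(G)$) allow me to pass the limit inside both integrals and obtain
\begin{equation*}
    \frac{d}{dt}v(A(\mu_0+tf))\Big|_{t=0}=\left(\frac{\delta v}{\delta\mu}(\nu_0),\,\nabla_\mu A(\mu_0)\,f\right)=\left((\nabla_\mu A(\mu_0))^{T}\frac{\delta v}{\delta\mu}(\nu_0),\,f\right),
\end{equation*}
where the last equality is simply the definition of the transpose with respect to the Euclidean pairing on $\R^n$.

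The main technical hurdle is upgrading the pointwise directional definition of the Fr\'echet derivative (stated in the paper only as a limit along convex combinations with a single fixed endpoint) to the integral form used above; this step crucially exploits continuous Fr\'echet differentiability, which guarantees that $\tau\mapsto A(\mu_0+\tau tf)$ is $C^1$ on $[0,1]$ with derivative $t\,\nabla_\mu A(\mu_0+\tau tf)f$, and likewise for $v$ along the convex path in the image. A secondary bookkeeping point is the interpretation of $(\nabla_\mu A)^{T}$: because each $\frac{\delta A_i}{\delta\mu}$ is specified only up to an additive constant via the normalization $\sum_j\bigl(\frac{\delta A_i}{\delta\mu}\bigr)_j=0$, the transpose should be understood as a map into the hyperplane $\{g\in\R^n:\sum_i g_i=0\}$, and the pairing with $f$ is unambiguous precisely because $\sum_i f_i=0$.
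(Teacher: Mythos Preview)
Your proof is correct and reaches the same conclusion, but the route differs slightly from the paper's. The paper proceeds by a direct first-order Taylor expansion: it writes $A(\mu_0+tf)=A(\mu_0)+t\,\nabla_\mu A(\mu_0)f+o(t)$, notes that $g:=\nabla_\mu A(\mu_0)f$ satisfies $\sum_i g_i=0$ (as the time-derivative of a path in $\P(G)$), then expands $v(A(\mu_0)+tg+o(t))$ once more using the Fr\'echet derivative of $v$ at $\nu_0$ and collects the $o(t)$ terms. Your integral mean-value approach is equivalent in spirit but makes the use of \emph{continuous} differentiability explicit: the paper's $o(t)$ manipulations tacitly need uniformity in the direction, which your fundamental-theorem-of-calculus representations supply cleanly. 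The paper's argument is shorter; yours is more self-contained given that the Fr\'echet derivative here is defined only as a directional limit along convex combinations, and you also spell out the normalization ambiguity in $(\nabla_\mu A)^T$ that the paper leaves implicit.
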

\begin{proof}
Let $\nu\in\P(G)$ and $f=\nu-\mu_0$. Set $\sigma_t=A(\mu_0+tf)=A(\mu_0)+\nabla\mu A(\mu_0)ft+o(t)$ and denote $\nabla\mu A(\mu_0)f$ as $g$. Then $\sum g_i=\sum \dot\sigma_i=0$.  

Then \begin{align*}
    v(A(\mu_0+tf))&=v(A(\mu_0)+tg+o(t)) \\
    &v(A(\mu_0))+\bigg(\frac{\delta v}{\delta\mu}(A(\mu_0)), tg+o(t)\bigg)+o(tg+o(t)) \\
    &=v(A(\mu_0))+t\bigg(\frac{\delta v}{\delta\mu}(A(\mu_0)),g\bigg)+o(t) \\
    &= v(A(\mu_0))+t\bigg((\nabla_\mu A(\mu_0))^T\frac{\delta v}{\delta\mu}(A(\mu_0))\bigg)+o(t)
\end{align*}
\end{proof}
\begin{corollary}\label{cor:cont-diff}
    $v\circ A$ is differentiable in the sense of Wasserstein.
\end{corollary}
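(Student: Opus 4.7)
The plan is to deduce Corollary 4.5 directly from Lemma 4.4 combined with Theorem 2.14, so the main work is to verify the continuity hypothesis required to apply Theorem 2.14. Since Theorem 2.14 states that any Fréchet continuously differentiable functional on $\P(G)$ admits a Wasserstein differential, it suffices to show that $v\circ A$ has a Fréchet derivative at every $\mu_0\in\P(G)$ and that this derivative depends continuously on $\mu_0$.

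First I would apply Lemma 4.4: for any $\mu_0\in\P(G)$, $v\circ A$ is Fréchet differentiable at $\mu_0$ with
\[
\frac{\delta(v\circ A)}{\delta\mu}(\mu_0)=(\nabla_\mu A(\mu_0))^T\,\frac{\delta v}{\delta\mu}(A(\mu_0)).
\]
(Technically one should replace this representative by its mean-zero shift, as in the definition of Fréchet derivative, but this does not affect continuity.) This identifies the Fréchet derivative explicitly as a composition of objects whose continuity is built into the hypotheses.

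Next I would check continuity of the map $\mu_0\mapsto(\nabla_\mu A(\mu_0))^T\,\frac{\delta v}{\delta\mu}(A(\mu_0))$. Because $A$ is continuously Fréchet differentiable, it is in particular continuous, so $\mu_0\mapsto A(\mu_0)$ is continuous from $\P(G)$ to $\P(G)$. Since $v$ is continuously Fréchet differentiable, the map $\nu\mapsto\frac{\delta v}{\delta\mu}(\nu)$ is continuous, hence its composition with $A$ is continuous in $\mu_0$. By assumption $\mu_0\mapsto\nabla_\mu A(\mu_0)$ is also continuous, and the product of two continuous matrix/vector-valued maps is continuous. Thus $v\circ A$ is Fréchet continuously differentiable at every $\mu_0\in\P(G)$.

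Finally, I would invoke Theorem 2.14 to conclude that $v\circ A$ admits a Wasserstein differential at each $\mu_0$, with
\[
\nabla_\W(v\circ A)(\mu_0)=\nabla_G\!\left((\nabla_\mu A(\mu_0))^T\,\frac{\delta v}{\delta\mu}(A(\mu_0))\right).
\]
There is no substantial obstacle here; the only subtlety is the bookkeeping around replacing representatives of the Fréchet derivative by mean-zero representatives (so that Lemma 4.4's formula matches the convention $\sum f_i=0$ used in the definition), but this amounts to subtracting a constant, which leaves the graph gradient unchanged.
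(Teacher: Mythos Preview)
Your proof is correct and follows essentially the same approach as the paper: use Lemma~4.4 to identify $\frac{\delta(v\circ A)}{\delta\mu}(\mu_0)=(\nabla_\mu A(\mu_0))^T\frac{\delta v}{\delta\mu}(A(\mu_0))$, observe that each factor depends continuously on $\mu_0$ so that $v\circ A$ is Fr\'echet continuously differentiable, and then invoke Theorem~2.14. Your version is slightly more explicit (spelling out why $A$ itself is continuous and noting the mean-zero normalization), but the argument is the same.
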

\begin{proof}
    $\mu\mapsto(\nabla_{\bar\mu} A(\mu))^T$ is continuous, and $\mu\mapsto\frac{\delta v}{\delta\mu}\circ A$ is continuous. Hence $\mu\mapsto \frac{\delta}{\delta\mu}(v\circ A)(\mu)$ is continuous. Apply Theorem \ref{thm:2.14} to conclude that $v\circ A$ has a $\W$-gradient at $\mu$.
\end{proof}
\begin{theorem}
    \begin{itemize}
        \item [(i)] Set $u(t,\mu)=u_0(\sigma(t,\mu))$ where $u_0:\P(G)\to\R$ is Fr\'echet continuously differentiable. Then $u$ is continuously differentiable.
        \item [(ii)] $\partial_tu
        =\Delta_{\text{ind}}u(t,\mu)$
    \end{itemize}
\end{theorem}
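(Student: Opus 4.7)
The plan is to glue together three ingredients: the $C^1$ regularity of the flow $(t,\mu)\mapsto\sigma(t,\mu)$ from Theorem~\ref{thm:sigma_C1}, the chain rule of Lemma~\ref{lem:chain-rule}, and the semigroup property of the ODE in Theorem~\ref{thm:4.2}.

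For part (i), I first apply Theorem~\ref{thm:sigma_C1} with $F=\xi$ (which is $C^1$ and Lipschitz by the standing assumptions in this section) to conclude that $\sigma$ is jointly $C^1$, and in particular that $\nabla_\mu\sigma(t,\mu)$ depends continuously on $(t,\mu)$. Lemma~\ref{lem:chain-rule} applied to $A=\sigma(t,\cdot)$ and $v=u_0$ then gives
\[
\frac{\delta u}{\delta\mu}(t,\mu)=\bigl(\nabla_\mu\sigma(t,\mu)\bigr)^{T}\frac{\delta u_0}{\delta\mu}\bigl(\sigma(t,\mu)\bigr),
\]
which is continuous in $(t,\mu)$. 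Hence $\mu\mapsto u(t,\mu)$ is Fr\'echet continuously differentiable, and Corollary~\ref{cor:cont-diff} supplies the Wasserstein gradient $\nabla_\W u(t,\mu)=\nabla_G\tfrac{\delta u}{\delta\mu}(t,\mu)$. Differentiability in $t$ at fixed $\mu$ follows from the ordinary chain rule
\[
\partial_t u(t,\mu)=\Bigl(\tfrac{\delta u_0}{\delta\mu}(\sigma(t,\mu)),\,\dot\sigma(t,\mu)\Bigr),
\]
which is jointly continuous because both $\dot\sigma=\xi\circ\sigma$ and $\tfrac{\delta u_0}{\delta\mu}$ are continuous.

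For part (ii), using $\dot\sigma(t,\mu)=\xi(\sigma(t,\mu))=\div_{\sigma(t,\mu)}(\nabla_G\log\sigma(t,\mu))$ the displayed formula gives
\[
\partial_t u(t,\mu)=\Bigl(\tfrac{\delta u_0}{\delta\mu}(\sigma(t,\mu)),\,\xi(\sigma(t,\mu))\Bigr).
\]
On the other hand, substituting $\nabla_\W u=\nabla_G\tfrac{\delta u}{\delta\mu}$ and applying the integration-by-parts identity of Section~\ref{section:1},
\begin{align*}
\Delta_{\text{ind}}u(t,\mu)
&=-\bigl(\nabla_\W u(t,\mu),\nabla_G\log\mu\bigr)_\mu\\
&=\Bigl(\tfrac{\delta u}{\delta\mu}(t,\mu),\,\div_\mu(\nabla_G\log\mu)\Bigr)\\
&=\Bigl(\tfrac{\delta u_0}{\delta\mu}(\sigma(t,\mu)),\,\nabla_\mu\sigma(t,\mu)\cdot\xi(\mu)\Bigr),
\end{align*}
where the last step uses the formula for $\tfrac{\delta u}{\delta\mu}$ and takes the adjoint. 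Matching the two expressions thus reduces to the flow identity $\nabla_\mu\sigma(t,\mu)\cdot\xi(\mu)=\xi(\sigma(t,\mu))$, which follows by differentiating the semigroup relation $\sigma(t,\sigma(s,\mu))=\sigma(t+s,\mu)$ (a consequence of uniqueness in Theorem~\ref{thm:4.2}) in $s$ at $s=0$.

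I expect the main subtlety to lie not in any individual step but in their compatibility with the boundary of the simplex. The Fr\'echet derivative is defined only modulo constants, and $(\nabla_\W u,\nabla_G\log\mu)_\mu$ must be read through the bounded extension of $\bar g$ when $\mu$ touches the boundary. One must check that $(\nabla_\mu\sigma(t,\mu))^T$ sends zero-mean vectors to zero-mean vectors, which follows because $\sigma(t,\cdot)$ maps $\P(G)$ into $\P(G)$, so $\sum_i(\nabla_\mu\sigma(t,\mu)f)_i=0$ whenever $\sum_if_i=0$. A minor point is that Theorem~\ref{thm:sigma_C1} states its $C^1$ conclusion on $(0,T)\times\R^n$; extending up to $t=0$ on $\P(G)$ can be done either by re-reading its proof or by a trivial reparametrization.
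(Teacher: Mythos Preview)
Your argument is correct and, for part (i), proceeds exactly as the paper does: invoke Theorem~\ref{thm:sigma_C1} for the $C^1$ regularity of the flow, then Lemma~\ref{lem:chain-rule} and Corollary~\ref{cor:cont-diff} for the Fr\'echet and Wasserstein differentiability of $u(t,\cdot)$, and the chain rule for differentiability in $t$.

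For part (ii) your route differs in packaging from the paper's. You compute $\partial_t u$ and $\Delta_{\text{ind}}u$ separately, expressing both through $\tfrac{\delta u_0}{\delta\mu}(\sigma(t,\mu))$, and then close the argument with the differentiated semigroup identity $\nabla_\mu\sigma(t,\mu)\,\xi(\mu)=\xi(\sigma(t,\mu))$. The paper instead uses the semigroup in integrated form: from $u(t-h,\sigma^\mu(h))=u(t,\mu)$ it Taylor-expands the left side in $h$, so that the $-\partial_t u$ and $(\nabla_\W u,-\nabla_G\log\mu)_\mu$ terms appear side by side and cancel directly. The paper's version never needs to write $\nabla_\mu\sigma(t,\mu)$ explicitly, while your version makes the dependence on the linearized flow visible and isolates the exact algebraic identity that drives the equality. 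Both rest on the same semigroup property and are equivalent in strength; your additional remarks on the boundary behavior and the zero-mean preservation of $(\nabla_\mu\sigma)^T$ are apt and match issues the paper leaves implicit.
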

\begin{proof}
    \begin{itemize}
        \item[(i)] We use Corollary \ref{cor:cont-diff} in $v=u_0$ to conclude that $u(t,\cdot)$ is continuously differentiable. Further since $\sigma(\cdot, \mu)$ is differentiable and $u_0$ is continuously Fr\'echet differentiable, apply Lemma \ref{lem:chain-rule} to conclude that $u(\cdot,\mu)$ is differentiable.
        \item[(ii)] Define $r(s)=\sigma(s,\sigma(h,\mu))$. In other words \[\begin{cases}
            \dot{r}=\xi(r) \\
            r(0)=\sigma^\mu(h)
        \end{cases}\] on $(0, t-h)$.\\
        We have \begin{equation}\label{eq:u-shift}
            u(t-h, \sigma^\mu(h))=u_0(\sigma^\mu(t))=u(t,\mu)
        \end{equation}
        
        \begin{align*}
        u(t-h,\sigma^\mu(h))&=u(t)-h\partial_t u(t, \sigma(0))+h(\nabla_\W u(t,\sigma(0)),-\nabla_G\log\sigma(0))_{\sigma(0)}+o(h) \\
        &=u(t,\mu)-h\partial_t u(t,\mu)-h(\nabla_\W u(t,\mu),\nabla_\W\log\mu)_\mu+o(h) \\
        \end{align*}

        From \eqref{eq:u-shift}, 
        \[-\partial_t u(t,\mu)-(\nabla_\W u(t,\mu),\nabla_G\log\mu)_\mu+\frac{o(h)}{h}=-\partial_t u(t,\mu)+(\div_\mu(\nabla_\W u(t,\mu)),\log\mu)=0\]
    \end{itemize}
\end{proof}

\section*{Acknowledgements}
    The author would like to thank Professor Wilfrid Gangbo and Mohit Bansil for their mentorship.

\end{document}